\documentclass[11pt]{article}
\usepackage[margin=1in]{geometry} 
\geometry{letterpaper}   

\usepackage{amssymb,amsfonts,amsmath,bbm,mathrsfs,stmaryrd}
\usepackage{xcolor}
\usepackage{url}

\usepackage[colorlinks,
             linkcolor=black!75!red,
             citecolor=blue,
             pdftitle={Dedekind complete posets from sheaves on von Neumann algebras},
             pdfauthor={Alexandru Chirvasitu},
             pdfsubject={operator algebras},
             pdfkeywords={oa.operator algebras, ct.category theory},
             pdfproducer={pdfLaTeX},
             pdfpagemode=None,
             bookmarksopen=true
             bookmarksnumbered=true]{hyperref}

\usepackage{tikz}
\usetikzlibrary{arrows,decorations.pathreplacing,decorations.markings,shapes.geometric,through,fit,shapes.symbols,positioning}

\usepackage[amsmath,thmmarks,hyperref]{ntheorem}
\usepackage{cleveref}

\crefname{section}{Section}{Sections}
\crefformat{section}{#2Section~#1#3} 
\Crefformat{section}{#2Section~#1#3} 

\crefname{subsection}{\S}{\S\S}
\crefformat{subsection}{#2\S#1#3} 
\Crefformat{subsection}{#2\S#1#3} 

\theoremstyle{change}

\newtheorem{lemma}{Lemma}[section]
\newtheorem{proposition}[lemma]{Proposition}
\newtheorem{corollary}[lemma]{Corollary}
\newtheorem{theorem}[lemma]{Theorem}

\theoremstyle{nonumberplain}

\theoremstyle{change}
\theorembodyfont{\upshape}
\theoremsymbol{\ensuremath{\blacklozenge}}

\newtheorem{definition}[lemma]{Definition}
\newtheorem{example}[lemma]{Example}
\newtheorem{remark}[lemma]{Remark}

\crefname{definition}{definition}{definitions}
\crefformat{definition}{#2definition~#1#3} 
\Crefformat{definition}{#2Definition~#1#3} 

\crefname{lemma}{lemma}{lemmas}
\crefformat{lemma}{#2lemma~#1#3} 
\Crefformat{lemma}{#2Lemma~#1#3} 

\crefname{proposition}{proposition}{propositions}
\crefformat{proposition}{#2proposition~#1#3} 
\Crefformat{proposition}{#2Proposition~#1#3} 

\crefname{example}{example}{examples}
\crefformat{example}{#2example~#1#3} 
\Crefformat{example}{#2Example~#1#3} 

\crefname{remark}{remark}{remarks}
\crefformat{remark}{#2remark~#1#3} 
\Crefformat{remark}{#2Remark~#1#3} 

\crefname{corollary}{corollary}{corollaries}
\crefformat{corollary}{#2corollary~#1#3} 
\Crefformat{corollary}{#2Corollary~#1#3} 

\crefname{theorem}{theorem}{theorems}
\crefformat{theorem}{#2theorem~#1#3} 
\Crefformat{theorem}{#2Theorem~#1#3} 

\crefname{equation}{}{}
\crefformat{equation}{(#2#1#3)} 
\Crefformat{equation}{(#2#1#3)}

\theoremstyle{nonumberplain}
\theoremsymbol{\ensuremath{\blacksquare}}

\newtheorem{proof}{Proof}
\newtheorem{proof of main}{Proof of \Cref{th.main}}
\newtheorem{proof of main_bis}{Proof of \Cref{th.main_bis}}

\DeclareMathOperator{\id}{id}

\newcommand\bN{{\mathbb N}}

\newcommand\bR{{\mathbb R}}
\newcommand\bC{{\mathbb C}}

\newcommand\cB{{\mathcal B}}
\newcommand\cC{{\mathcal C}}
\newcommand\cD{{\mathcal D}}
\newcommand\cF{{\mathcal F}}

\newcommand\wt{\widetilde}
\newcommand\wh{\widehat}
\DeclareMathOperator{\Hom}{Hom}
\DeclareMathOperator{\2}{II_1}
\DeclareMathOperator{\dv}{(\cat{div})}


\newcommand{\define}[1]{{\em #1}}

\newcommand{\cat}[1]{\textsc{#1}}

\newcommand{\qedhere}{\mbox{}\hfill\ensuremath{\blacksquare}}



\title{Dedekind complete posets from sheaves on von Neumann algebras}
\author{Alexandru Chirvasitu\footnote{University of California at Berkeley, \url{chirvasitua@math.berkeley.edu}}}

\begin{document}
\maketitle

\begin{abstract}
We show that for any two von Neumann algebras $M$ and $N$, the space of non-unital normal homomorphisms $N\to M$ with finite support, modulo conjugation by unitaries in $M$, is Dedekind complete with respect to the partial order coming from the addition of homomorphisms with orthogonal ranges; this ties in with work by Brown and Capraro, where the corresponding objects are given Banach space structures under various niceness conditions on $M$ and $N$. More generally, we associate to $M$ a Grothendieck site-type category, and show that presheaves satisfying a sheaf-like condition on this category give rise to Dedekind complete lattices upon modding out unitary conjugation. Examples include the above-mentioned spaces of morphisms, as well as analogous spaces of completely positive or contractive maps. We also study conditions under which these posets can be endowed with cone structures extending their partial additions. 
\end{abstract}

\noindent {\em Keywords: von Neumann algebra, poset, lattice, Dedekind complete, sheaf, sketch, cone}

\tableofcontents

\section*{Introduction}

Organizing objects one is interested in into ``moduli spaces'' and then endowing such spaces with as much structure as possible is one of the more familiar themes in mathematics. As an instance of this, with the objects in question being operator algebra homomorphisms,  Brown studies in \cite{MR2799809} the space $\Hom(A,B)$ of normal unital homomorphisms between two $W^*$-algebras $A,B$ up to conjugation by unitaries in $B$ (and similarly a $C^*$ version). As it turns out, when $B$ is an ultrapower $R^\omega$ of the hyperfinite $\2$ factor and $A$ is a separable, embeddable $\2$ factor (so as to define away Connes' embedding problem, stating that \define{any} separable $\2$ factor embeds in such a manner), this space exhibits a rich structure indeed: It can be given a complete metric and a convex structure compatible with this metric in the sense of \cite[2.1]{MR2799809}. 

This analysis is continued in \cite{MR2997389}. The authors show that the similarly-defined moduli space of (this time non-unital) homomorphisms $N\to M\otimes\cB(\ell^2(\bN))$ with finite supporting projection can be organized naturally as a cancellable semigroup. Moreover, when $N$ is a separable $\2$ factor and $M=X^\omega$ for some McDuff factor $X$ (i.e. $X\cong X\otimes R$, for the spatial von Neumann algebra tensor product), this semigroup can be made into a cone, and its enveloping group then becomes a vector space. Furthermore, the same type of metric as in \cite{MR2799809} makes this vector space into a Banach space. 

The motivation for the present paper is the following question: Given that the above constructions provide (a) a Banach space and (b) a cone sitting naturally inside of it, is it the case that the resulting partial order makes the Banach space into a Banach lattice? Once one is led to ask this though, the problem quickly becomes dissociated into two more or less independent halves: There is the Banach space aspect, perhaps requiring technical tools like McDuff-ness, and there is the question of whether or not the order on $\Hom(N,M\otimes B(\ell^2(\bN)))$ coming from its semigroup structure (which makes sense under very general circumstances) is complete. The main goal here is to answer this latter question affirmatively, making the connection with the former problem only towards the end. 

As we explain in \Cref{se.complete} (at the very end, after \Cref{prop.wedge_vee}), the results on completeness may be regarded as providing some small measure of evidence that the Banach spaces introduced in \cite{MR2997389} have preduals. This, in turn, would answer affirmatively a question by Sorin Popa, to the effect that embeddable factors have embeddings in $R^\omega$ with factorial commutant. Secondly, the same completeness results have consequences on the construction of the vector space structure itself, simplifying it somewhat; this is taken up in \Cref{se.cone}.

The contents of the paper are as follows:

In an attempt to isolate the main abstract features of the problem, in \Cref{se.sketch} we set up a framework where invariants of a von Neumann algebra $M$ such as $\Hom(N,M)$ mentioned above or similar moduli spaces of completely positive or completely bounded maps $N\to M$ can be treated simultaneously. This will require associating to $M$ a category $\cC_M$ with distinguished cocones (a structure referred to as a `sketch'; see below), and recasting these invariants as ``sheaves'' on this sketch. This is spelled out and clarified in \Cref{se.sketch}, but sheaves are essentially presheaves on $\cC_M$ which ``respect the cocone structure''. This will be more of a preliminary section, aimed at fixing the general setup; the only result we actually state here is \Cref{prop.monoid}, and for the proof we refer to that of \cite[9]{MR2997389}, which can easily be paraphrased into the present setting. 

\Cref{se.complete} contains the first main result of the paper, \Cref{th.main}. It is stated in the general language of sheaves on sketches, but the connection to the previous discussion is as follows: For any two $W^*$-algebras $M,N$ whatsoever, the set of non-unital morphisms $N\to M$ with finite supporting projection modulo conjugation by unitaries in $M$ is a Dedekind complete poset with respect to the ordering induced by its partial abelian monoid structure (\Cref{prop.monoid}). This means that any subset, regardless of cardinality, has a largest lower (least upper) bound as soon as it is bounded from below (respectively above).   

In \Cref{se.cone} we move to the ``continuous'' side of the problem, addressing the issue of when the monoids from the preceding section can be endowed with something like a vector space structure. The precise notion we need is that of partial cone structure, i.e. action of the partial semiring $([0,1],+,\cdot)$, and the relevant result is \Cref{th.main_bis}. It follows, only from the results on the completeness of the ordering, that such cone structures are unique when they exist. In conclusion, not much care is required when constructing one. 

Finally, \Cref{se.adjoints} contains only announcements of results to be proven elsewhere. Since the somewhat abstract categorical machinery of \Cref{se.sketch} was partly justified by the natural desire to study completely positive or contractive maps along with $W^*$ morphisms, it seems fair to point out that in fact, these do not give rise to fundamentally new invariants. Specifically (\Cref{prop.adjoints}), for von Neumann algebras $M,N$, the set of completely positive maps $N\to M$ is (functorially in $M$ and $N$) in bijection with $W^*$ morphisms $\wt N\to M$ for some von Neumann algebra $\wt N$; similarly for completely contractive maps.   

The point of \Cref{se.sketch}, however, is precisely that there is an abstract framework that will instantly accomodate all of these invariants (and similar ones that might come up); one need not go through results in the spirit of \Cref{prop.adjoints}, which are maybe a bit surprising and not immediately obvious.

\subsection*{Acknowledgements}

I would like to thank Valerio Capraro for bringing many of these problems to my attention (my interest was sparked following a talk he gave at UC Berkeley in the spring of 2013) and for his patient explanations on the contents \cite{MR2997389}. Thanks to Andre Kornell are also in order, for numerous discussions on the results covered in \Cref{se.adjoints}.

\section{The canonical sketch and sheaves on it}\label{se.sketch}

Throughout, capital letters such as $M,N$, etc. denote $W^*$-algebras. We will be taking for granted the basics of the theory of von Neumann algebras (a term used here as a synonym for `$W^*$-algebra'), as in \cite{MR1873025}, say. There are portions of the paper (e.g. the end of \Cref{se.complete}), offered mostly as motivation, where some of the main characters relating to Connes' embedding problem enter the discussion (the hyperfinite $\2$ factor, ultrafilters, etc.). For these I recommend the preliminary section of \cite{2010arXiv1003.2076C}, where all of the relevant notions are gathered together and presented very accessibly and to the point. 

All maps between von Neumann algebras are assumed to be weak$^*$ continuous (or normal), an assumption that will always be implicit, even when not repeated explicitly. $W^*$ morphisms or homomorphisms are $*$-preserving, multiplicative maps. They may or may not be unital, depending on context; it will always be clear which case we are considering. 

Whenever the symbol `$\otimes$' appears between two $W^*$-algebras, it denotes spatial tensor products (e.g. \cite[IV.5]{MR1873025}). The matrix algebra $M_n=M_n(\bC)$ is given the usual $*$-structure, via the transpose conjugate. 

We mention completely positive and completely contractive maps between $W^*$-algebras only very briefly, in \Cref{se.complete} and \Cref{se.adjoints}. Recall that a map $\theta:N\to M$ between $W^*$-algebras is a \define{complete contraction} \cite[p. 167]{MR1943007} if the maps $\theta\otimes\id:N\otimes M_n(\bC)\to M\otimes M_n(\bC)$ all have norm $\le 1$. Similarly, $\theta$ is \define{completely positive} \cite[IV.3]{MR1873025} if $\theta\otimes\id$ are all positive, in the sense that they turn positive elements $x^*x\in N\otimes M_n$ into positive elements $y^*y\in M\otimes M_n$. Just like $W^*$ morphisms, completely positive or contractive maps may or may not be unital (i.e. send $1$ to $1$), and we will specify or otherwise make it clear when they do and when they do not. 

We also assume some of the standard background on category theory, as covered by \cite{MR1712872}; this means functors, natural transformations, (co)limits, adjoint functors etc. In addition, \cite{MR1294136} is a natural source for some of the concepts we are about to review. We write $\cC(c,c')$ for the set of morphisms $c\to c'$ in a category $\cC$. 

Let $\cC$ be a category, and $\cD\subseteq \cC$ a subcategory. A \define{cocone} on $\cD$ in $\cC$ (simply called a cone in \cite[III.3]{MR1712872}) consists of an object $\bullet\in\cC$ and a natural transformation from the inclusion $\cD\subseteq\cC$ to the functor $\cD\to\cC$ constantly equal to $\bullet$. Explicitly, this means each object $d\in\cD$ is endowed with an arrow $d\to\bullet$ in $\cC$, and for each arrow $d\to d'$ in $\cD$ the resulting triangle 
	\[
		\tikz[anchor=base]{
  		\path (0,0) node (1) {$d$} +(2,0) node (2) {$d'$} +(1,-.5) node (3) {$\bullet$};
  		\draw[->] (1) -- (2);
  		\draw[->] (2) -- (3);
  		\draw[->] (1) -- (3); 
 		}
	\]
is commutative. Cones are defined dually, and consist of arrows $\bullet\to d$, etc. Finally, a cone is \define{limiting} if it makes $\bullet$ into the limit of the functor $\cD\subseteq\cC$ (\cite[III.4]{MR1712872}).

\begin{definition}\label{def.sketch}
A \define{sketch} consists of a small category $\cC$ together with a collection of distinguished cocones. We typically abuse terminology and refer to $\cC$ alone as being the sketch, with the cocones being implicit. 

A \define{sheaf} on a sketch $\cC$ is a contravariant functor $\cC\to\cat{Set}$ which turns the distinguished cocones into limiting cones. 
\end{definition}

Because the data on $\cC$ consists of cocones only (as opposed to both cones and cocones), our sketches are referred to as \define{colimit} sketches in \cite[2.55]{MR1294136}. The current usage is consistent with that in \cite{raey}, where colimit sketches are similarly the main characters.

\begin{example}
Some of the more ubiquitous examples of sketches are the so-called sites, also known as Grothendieck topologies \cite[III]{MR1300636}. These are small categories whose objects and arrows are supposed to be thought of as the open sets of some kind of ``space'', and inclusions between them, respectively. By definition, they come endowed with distinguished cocones, which heuristically indicate which collections of open sets ``cover'' other open sets. Finally, what's typically referred to as a sheaf on a site \cite[III.4]{MR1300636} is precisely a sheaf in the sense of \Cref{def.sketch}. 

This is the example that motivated the term `sheaf' in \cite[2.2.1]{raey} on a sketch, even in cases (such as that of \Cref{def.can_sketch} below) when the sketch is not necessarily a site.    
\end{example}

\begin{definition}\label{def.can_cat}
Let $M$ be a von Neumann algebra. The \define{finite subalgebra category} $\cC_M$ of $M$ has as objects the not-necessarily-unital $W^*$-subalgebras of $M$ with finite supporting projection. A morphism $A\to B$ between such objects is a partial isometry $u\in M$ such that:
	\begin{itemize}
		\item the initial space $u^*u$ of $u$ is the supporting projection $1_A$ of $A$; 
		\item the final space $uu^*$ is dominated by the supporting projection $1_B$ of $B$ (i.e. $uu^*\le 1_B$ in the usual partial ordering on projections);
		\item $u$ conjugates $A$ into $B$, in the sense that $uAu^*\le B$. 
	\end{itemize} 
Composition of morphisms is defined in the obvious manner, as multiplication of the underlying partial isometries. 
\end{definition}

\begin{remark}
\Cref{def.can_cat} works just fine for arbitrary subalgebras, with perhaps infinite supporting projection, but finiteness is a technical tool need below (e.g. it is crucial to the cancellability results \cite[9]{MR2997389} and \Cref{prop.monoid}; see also \Cref{ex.mod_space}). 

We sometimes drop the adjective `finite' and just refer to $\cC_M$ as the subalgebra category. 
\end{remark}

Finally, we come to the main definition of this section:

\begin{definition}\label{def.can_sketch}
The \define{canonical sketch} on the subalgebra category $\cC_M$ of $M$ consists of the following cocones: 
\renewcommand{\labelenumi}{(\alph{enumi})}
\begin{enumerate}
\item pushout-type ones of the form
	\[
		\tikz[anchor=base]{
  		\path (0,0) node (1) {$\bC p$} +(1,.5) node (2) {$\sharp$} +(1,-.5) node (3) {$\sharp$} +(2,0) node (4) {$\bullet$,};
  		\draw[->] (1) -- (2);
  		\draw[->] (1) -- (3);
  		\draw[->] (2) -- (4);
  		\draw[->] (3) -- (4); 
 		}
	\]
where $p\in M$ is a projection, the $\sharp$'s are $\le 2$-dimensional algebras generated by subprojections $q\le p$ and $p-q$ (possibly different $q$'s for the two $\sharp$'s), $\bullet$ is the subalgebra generated by the two $\sharp$'s, and the arrows are inclusions, i.e. as partial isometries, they are all equal to $p$.; 
\item coproduct-type ones, consisting of the inclusions $A_i\to\bigoplus A_i$ into the direct sum of a family of mutually orthogonal subalgebras $A_i\le M$, $i\in I$. $I$ can be any set whatsoever, including empty; the direct sum is then understood to be the zero subalgebra of $M$.  
\end{enumerate}
\end{definition}

Recall that sketches are supposed to be cones in $\cC_M$. Hence, it is implicit in \Cref{def.can_sketch} that all supporting projections in sight (such as that of $\bigoplus A_i$) are finite in $M$.

\begin{remark}
\Cref{def.can_sketch} follows a familiar pattern when dealing with (co)limits of functors: All colimits can be built out of just pushouts and arbitrary coproducts (and dually, limits can be constructd using pullbacks and products; this is a variant of \cite[V.2]{MR1712872}), so cocones of the types (a) and (b) are what one might expect to work with. 

Note that when referring to (a) and (b) as `pushout-type' and `coproduct-type', the terms are only intended to suggest analogies; these cocones are not, in general, genuine colimits in $\cC_M$. The sheaf condition (\Cref{def.sketch}) will demand that they be turned into limits in $\cat{Set}$ however.  
\end{remark}

\begin{definition}\label{def.sheaf}
For a $W^*$-algebra $M$, a \define{sheaf on $M$} is a sheaf in the sense of \Cref{def.sketch} on the sketch $\cC_M$. 
\end{definition}

\begin{remark}\label{rem.empty_prod}
In \Cref{def.can_sketch}, we allowed the indexing set $I$ of a coproduct-type cocone to be empty. The corresponding condition on a sheaf is that it turn the zero algebra into the terminal object in $\cat{Set}$, i.e. the singleton. 
\end{remark}

There are a few examples of sheaves on $M$ that the reader should have in mind, and which provided the motivation for these definitions to begin with:

\begin{example}\label{ex.Wast_mor}
Let $N$ be another $W^*$-algebra, and consider the sheaf $\cat{W}^*_N$ sending a finitely-supported subalgebra $A\le M$ to the set of unital $W^*$ morphisms $N\to 1_AM1_A$ whose image commutes with $A$ (we will simply say that the maps themselves commute with $A$). 

This is easily seen to define a persheaf on $\cC_M$ (i.e. a contravariant functor into $\cat{Set}$) essentially because whatever commutes with an algebra also commtues with a smaller algebra. We leave it to the reader to check that the presheaf turns the cocones of \Cref{def.sketch} into limits in $\cat{Set}$.  
\end{example}

\begin{example}\label{ex.cp}
We can play the exact same game with completely positive maps rather than $W^*$ morphisms. Indeed, for $N$ as before, we can define the sheaf $\cat{CP}_N$ ($\cat{CP}$ for completely positive) sending $A\in\cC_M$ to the set of unital completely positive maps $N\to 1_AM1_A$ commuting with $A$ 
\end{example}

\begin{example}\label{ex.cc}
Repeat previous example with completely contractive maps to get a sheaf $\cat{CC}_N$. 
\end{example}

\begin{remark}\label{rem.stack}
If $\cC_M$ is heuristically a site, we ought to be able to make sense of more sophisticated notions usually associated to sites, such as, say, stacks. These would be ``categorified'' sheaves, in the sense of being contravariant functors from $\cC_M$ to the bicategory of categories, functors, and natural transformations, with an additional compatibility with the cocones (e.g. \cite{MR1905329}). 

Associating to $A\in\cC_M$ the category of $A-1_AM1_A$ Hilbert space bimodules (i.e. correspondences from $1_AM1_A$ to $A$ in the sense of Connes' \cite[Chapter 5, Appendix B]{MR1303779}) produces precisely such a stack-like object. We will not elaborate, as these ideas play no further role in the paper.  
\end{remark}

Finally, we define objects analogous to the morphisms-up-to-conjugation sets of \cite{MR2799809,MR2997389}.

\begin{definition}\label{def.mod_pseudo}
A morphism $u:A\to B$ in $\cC_M$ is a \define{pseudoisomorphism} if the final space $uu^*$ of the underlying partial isometry $u$ is precisely $1_B$ (as opposed to just being dominated by $1_B$; cf. \Cref{def.can_cat}). 

Let $\cF$ be a sheaf on $M$. The \define{moduli space of $\cF$-objects} $\pi(\cF)$ is a set defined as follows:

The elements are equivalence classes of elements in the disjoint union \[ \coprod_{A\in\cC_M} \cF(A), \] where $x\in\cF(A)$ and $y\in\cF(B)$ are equivalent if there is some pseudoisomorphism $u:A\to B$ in $\cC_M$ such that $x=\cF(u)(y)$.

We write either $[x]$ or just plain $x$ for the class of $x\in\cF(A)$ in $\pi(\cF)$. 
\end{definition}

\begin{example}\label{ex.mod_space}
Applying $\pi$ to the sheaf $\cat{W}^*_N$ of \Cref{ex.Wast_mor} produces exactly the set of non-unital morphisms $N\to M$ up to conjugation by unitaries in $M$. This is because even though in \Cref{def.mod_pseudo} we are conjugating by partial isometries, an equivalence by partial isometries between \define{finite} projections can always be extended to a unitary equivalence \cite[V.1.38]{MR1873025}. 
\end{example}

So far $\pi(\cF)$ is only a set, but it is naturally endowed with additional structure. 

Suppose first that $A,B\in\cC_M$ are such that the projections $1_A$ and $1_B$ are orthogonal (we simply say that $A$ and $B$ themselves are orthogonal). Then, by the sheaf conditions, $\cF(A\oplus B)$ is the product of the sets $\cF(A)$ and $\cF(B)$. Hence, for every $x\in\cF(A)$, $y\in\cF(B)$, it makes sense to define $x+y\in\cF(A\oplus B)$ as the unique element projecting on $x$ and $y$ through the canonical maps.  

More generall, if $u1_Bu^*$ is orthogonal to $1_A$ for some partial isometry $u$, $u^*u=1_B$, then one can still make sense of $x+y$ at the level of $\pi(\cF)$ by defining it as above in $\cF(A\oplus uBu^*)$. The definition, it turns out, does not depend on $u$ as long as $B$ is moved orthogonally to $A$ (cf. \cite[Definition 4]{MR2997389}). We record all of this in the next proposition. The only perhaps non-obvious claim is the cancellability of the partial addition; we do not repeat the argument from \cite[Proposition 9]{MR2997389}, which contains all of the needed ingredients and can easily be adapted here.

First, one more piece of terminology. This is a very guessable definition, but in any event:

\begin{definition}\label{def.part_mon}
Let $G$ be a set endowed with a partial binary operation `$+$' and an element $0\in G$. We say that $(G,+,0)$ is a \define{partial abelian monoid} if
	\begin{itemize}
		\item Whenever $x+y$ is defined so is $y+x$, and the two are equal;
		\item Whenever one of $(x+y)+z$ and $x+(y+z)$ is defined so is the other, and they are equal;
		\item $0+x=x$ for all $x\in G$ (in particular, $0+x$ is always defined). 
	\end{itemize}

A partial monoid $G$ as above (dropping the adjective `abelian' and abusing notation by omitting `$+$' and `$0$') is \define{cancellable} or \define{has cancellation} if $x+y=x+z$ implies $y=z$.  
\end{definition}

Recall from \Cref{rem.empty_prod} that for any sheaf $\cF$, the image of the zero algebra $\{0\}$ is a singleton.

\begin{proposition}\label{prop.monoid}
For any sheaf $\cF$ on $M$, the partial operation `$+$' defined above on $\pi(\cF)$ makes the latter into a partial abelian monoid, with $0$ being the (class in $\pi(\cF)$ of the) single element of $\cF(\{0\})$. Moreover, $\pi(\cF)$ is cancellable. \qedhere
\end{proposition}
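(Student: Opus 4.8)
The plan is to verify the three partial-abelian-monoid axioms using the sheaf conditions, and then to handle cancellability. For the axioms, the key tool is the coproduct-type cocone condition: if $A,B\in\cC_M$ are orthogonal, then $\cF(A\oplus B)\cong\cF(A)\times\cF(B)$, so $x+y$ is literally the pair $(x,y)$ viewed inside $\cF(A\oplus B)$. Commutativity is then immediate from the symmetry $A\oplus B=B\oplus A$ together with functoriality of $\cF$ applied to the (identity) partial isometry realizing this equality. Associativity follows the same way: given pairwise-orthogonal $A,B,C$, both $(x+y)+z$ and $x+(y+z)$ are the triple $(x,y,z)$ inside $\cF(A\oplus B\oplus C)$, using that the coproduct-type cocone for $\{A,B,C\}$ factors through the ones for $\{A\oplus B,C\}$ and $\{A,B\oplus C\}$. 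The unit axiom uses \Cref{rem.empty_prod}: $\cF(\{0\})$ is a singleton, and $\{0\}\oplus A=A$, so $0+x=x$. The one genuine subtlety at this stage is well-definedness of `$+$' on $\pi(\cF)$ rather than on $\coprod_A\cF(A)$: I would first fix representatives, then check that replacing $x$ by $\cF(v)(x)$ for a pseudoisomorphism $v$, or moving $B$ to $uBu^*$ by a partial isometry $u$ taking $1_B$ orthogonally to $1_A$, does not change the class $[x+y]$; this is the bookkeeping deferred to in the text (the analogue of \cite[Definition 4]{MR2997389}), and amounts to chasing a commuting diagram of partial isometries through $\cF$.

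The main obstacle is cancellability, and here I would follow the strategy of \cite[Proposition 9]{MR2997389} adapted to the sheaf language. Suppose $[x]+[y]=[x]+[z]$ in $\pi(\cF)$ with $x\in\cF(A)$, $y\in\cF(B)$, $z\in\cF(C)$. After moving things around by pseudoisomorphisms we may assume $B$ and $C$ are both placed orthogonally to $A$ inside $M$ — this is where finiteness of the supporting projections is essential, since it guarantees enough ``room'' in $M$ (via the halving/comparison machinery for finite projections, cf. \cite[V.1.38]{MR1873025}) and it is what makes the partial isometries we need actually exist. The hypothesis says $x+y\in\cF(A\oplus B)$ and $x+z\in\cF(A\oplus C)$ become equal in $\pi(\cF)$, i.e. there is a pseudoisomorphism $w:A\oplus B\to A\oplus C$ with $\cF(w)(x+z)=x+y$. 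The heart of the argument is to show $w$ can be chosen to restrict to the identity on $A$ (equivalently, to fix $x$), so that it induces a pseudoisomorphism $B\to C$ carrying $z$ to $y$; this uses a conjugation/perturbation argument showing that two embeddings of $A$ that agree after being augmented by $x$-data can be intertwined by a unitary commuting with the relevant data, again leaning on finiteness.

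I expect the well-definedness bookkeeping and the ``$w$ may be taken to fix $A$'' step to be the two places where care is genuinely needed; everything else (commutativity, associativity, unit) is a short diagram chase through the coproduct-type sheaf condition. Since the statement explicitly permits citing \cite[Proposition 9]{MR2997389} — the text says the argument ``can easily be adapted here'' — I would present the monoid axioms in full and then indicate the translation of the cancellability proof, pointing out precisely where the coproduct cocones replace the ad hoc constructions of that reference and where finiteness of supporting projections enters.
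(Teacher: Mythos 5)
Your proposal follows the same route as the paper: the monoid axioms and the well-definedness of `$+$' are treated as routine consequences of the coproduct-type sheaf condition (following \cite[Definition 4]{MR2997389}), and cancellability is not reproved from scratch but obtained by adapting \cite[Proposition 9]{MR2997389}, which is exactly what the text itself does. One caution for a written version: your formulation of the key step---that the intertwining pseudoisomorphism $w$ can be chosen to fix $A$---is essentially a restatement of the conclusion of cancellation, so the adaptation must genuinely follow the argument of the cited proposition (extracting intertwiners between the complementary summands from $w$ and exploiting finiteness of the supporting projections) rather than treating that choice as an independently available lemma.
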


Cancellation, in particular, will play a crucial role in the sequel.

\begin{remark}\label{rem.proj_monoid}
The supporting projection of an element $x\in\pi(\cF)$ is well defined up to unitary equivalence, and associating it to $x$ consistutes a morphism (in the obvious sense) from the partial monoid $\pi(\cF)$ to that of equivalence classes of finite projections in $M$. 
\end{remark}

\begin{remark}\label{rem.inf_sums}
Note that in fact there is even more structure on $\pi(\cF)$ than \Cref{prop.monoid} lets on: It makes sense to add \define{arbitrary} families $x_i\in\pi(\cF)$, so long as their supporting projections can be moved inside $M$ so that they form an orthogonal family.

We will also talk freely about possibly infinite sums of elements $x_i\in\cF(A_i)$ (so at the level of $\cF$ rather than $\pi(\cF)$) when the algebras $A_i$ are mutually orthogonal. 
\end{remark}

\section{Sheaves give rise to Dedekind complete posets}\label{se.complete}

We fix a von Neumann algebra $M$ and a sheaf $\cF$ on it for the remainder of the paper, and freely resort to the notation introduced in the previous section, including `$+$' for the partial addition on $\pi(\cF)$. For projections $p\in M$, we write $\cF(p)$ as short for $\cF(\bC p)$.

As mentioned at the beginning, the paper is mostly about the partial order $(\pi(\cF),\le)$ resulting from the monoid structure:

\begin{definition}\label{def.le}
For $x,y\in \pi(\cF)$, we write $x\le y$ if there is some $z\in\pi(\cF)$ such that $x+z=y$. 
\end{definition}

A priori, `$\le$' is only a binary relation. As expected however, we have:

\begin{lemma}\label{lem.le}
$(\pi(\cF),\le)$ is a poset. 
\end{lemma}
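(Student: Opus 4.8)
To show that $(\pi(\cF),\le)$ is a poset, I must verify reflexivity, transitivity, and antisymmetry. Reflexivity is immediate: $x + 0 = x$ by the partial monoid axioms (\Cref{prop.monoid}), so $x\le x$. Transitivity is nearly as easy: if $x\le y$ and $y\le z$, pick witnesses $x+a = y$ and $y+b = z$; then $z = (x+a)+b = x+(a+b)$ by associativity of the partial addition, provided $a+b$ is defined — and the associativity axiom guarantees exactly that, since $(x+a)+b$ is defined. Hence $x\le z$. The only subtle point, as usual for partial-monoid orderings, is antisymmetry, and this is where cancellability does the work.

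For antisymmetry, suppose $x\le y$ and $y\le x$, with witnesses $x+a = y$ and $y+b = x$. Substituting, $x + (a+b) = (x+a)+b = y + b = x = x+0$. Here I should be slightly careful that $a+b$ is defined: it is, because $(x+a)+b = y+b$ is defined, so by the associativity axiom $x+(a+b)$ is defined. Now cancellation (\Cref{prop.monoid}) applied to $x + (a+b) = x + 0$ yields $a+b = 0$. It then remains to see that $a+b = 0$ forces $b = 0$ (and $a=0$), which I expect to follow from the structure of $\pi(\cF)$: the supporting projection is additive under `$+$' (\Cref{rem.proj_monoid}), so the supporting projection of $b$ is dominated by that of $a+b$, which is $0$; an element of $\cF(A)$ for $A$ the zero algebra is the unique point of a singleton (\Cref{rem.empty_prod}), hence $b$ is the class of $0$. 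Therefore $x = y+b = y+0 = y$.

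The step I expect to require the most care is the very last one — deducing $b=0$ from $a+b=0$ — since it reaches past the bare partial-monoid axioms into the specific way $\pi(\cF)$ is assembled from the sheaf. If one wanted to stay purely formal, one could instead isolate the needed property as a lemma (``$x+y = 0 \implies x = y = 0$'') and prove it from the fact that a summand's supporting projection is dominated by the sum's, together with \Cref{rem.empty_prod}; alternatively, it is a general fact that a cancellable partial abelian monoid in which $0$ is the only invertible element is automatically partially ordered, and here $a+b=0$ with $b+0=b$ makes $b$ invertible. Either route closes the argument; everything else is a routine unwinding of the axioms in \Cref{def.part_mon}.
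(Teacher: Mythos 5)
Your proof is correct and follows the same route as the paper: reflexivity from the zero element, transitivity from associativity, and antisymmetry via cancellation reducing to $a+b=0$ and then $a=b=0$. The only difference is that you spell out the final step (via \Cref{rem.proj_monoid} and \Cref{rem.empty_prod}), which the paper leaves as an easy consequence of the definition of $\pi(\cF)$, and your expanded justification is sound.
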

\begin{proof}
Reflexivity and transitivity are clear (the former because of the existence of an additive zero for `$+$', and the former by associativity). We only need to argue that `$\le$' is antisymmetric. 

Suppose $x\le y\le x$. By definition, we have $y=x+z$ and $x=y+w=x+z+w$. By cancellation we get $z+w=0$, and hence (as can easily be seen just from the definition of $\pi(\cF)$) $z=w=0$. 
\end{proof}

We now start focusing on the completeness properties of the poset $(\pi(\cF),\le)$ that are the main topic of the paper. First:

\begin{definition}\label{def.ded}
A poset $(P,\le)$ (or its underlying order $\le$) is \define{Dedekind complete} if every non-empty subset bounded from below has a largest lower bound and dually, every non-empty subset bounded from above has a least upper bound. 

We write $\bigwedge S$ and $\bigvee S$ for the largest lower bound and the least upper bound of a set $S$ respectively. 
\end{definition}

\begin{remark}
This is a somewhat non-standard definition: Dedekind completeness is usually considered only in the context of lattices, when pairs of elements are assumed to always have greatest lower and least upper bounds. This is not implicit in \Cref{def.ded}, and for good reason. 

As we will see later (\Cref{th.main}), $(\pi(\cF),\le)$ always has largest lower bounds for arbitrary subsets, but not least upper bounds, even for pairs of elements. This is why the relevant notion here is that of \Cref{def.ded}. 
\end{remark}

\begin{theorem}\label{th.main}
Keeping the notations fixed above, $(\pi(\cF),\le)$ is Dedekind complete. 
\end{theorem}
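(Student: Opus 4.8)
The plan is to establish Dedekind completeness by first reducing to the case of largest lower bounds and then constructing them explicitly. Note that if every subset bounded below has a greatest lower bound, then the least upper bound of a set $S$ bounded above by some $b$ can be obtained as $\bigwedge\{z : z \ge s \text{ for all } s\in S\}$, which is nonempty (it contains $b$) and bounded below (by any element of $S$), so its infimum exists and is easily checked to be $\bigvee S$. So the crux is: \emph{every nonempty subset $S\subseteq\pi(\cF)$ that is bounded below has a largest lower bound.}

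To construct $\bigwedge S$, I would work with representatives living over a common supporting projection. First I would observe, using \Cref{rem.proj_monoid} and the corresponding statement for finite projections in $M$ (which are Dedekind complete in the Murray--von Neumann order, at least the finitely supported ones we care about), that the supporting projections of elements of $S$ have a meet; moreover, passing through pseudoisomorphisms, one can arrange all elements $s\in S$ to be represented by $x_s\in\cF(A_s)$ with the $A_s$ sitting inside $1_pM1_p$ for a fixed finite projection $p$, with $p$ a lower bound for all the $1_{A_s}$. For any lower bound $\ell$ of $S$, write $s = \ell + (s-\ell)$; the key point is that the ``differences'' $s - \ell$, together with $\ell$ itself, all decompose $s$, and by cancellation the relation $\ell \le s$ pins down $s-\ell$ uniquely. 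The candidate for $\bigwedge S$ should be built by taking, inside a suitable $\cF(A)$ for $A$ the algebra generated by the relevant subprojections, the element obtained from the sheaf (product) condition on a coproduct-type cocone that ``captures the common part'' of all the $x_s$.

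More concretely, here is the mechanism I expect to use. Fix a lower bound $\ell$ of $S$, with $\ell$ represented on a projection $e$. For each $s\in S$ we have $s = \ell + t_s$ with $t_s\ge 0$ uniquely determined. Now I want to take the ``infimum of the $t_s$'' to be $0$ in the appropriate sense and show $\ell$ is already close to optimal — but that is of course false in general, since $\ell$ was arbitrary. Instead, the right approach is: among all lower bounds, the candidate infimum is detected projection-by-projection via the cocones of type (a) in \Cref{def.can_sketch}. That is, the pushout-type cocones let one glue/compare elements of $\cF$ over subprojections $q\le p$ and $p-q$; the sheaf condition forces $\cF$ to behave like a sheaf in the Grothendieck sense with respect to orthogonal decompositions, so an element of $\cF(A)$ is determined by its ``restrictions'' to pieces. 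Using this, I would show that the collection of lower bounds of $S$ is itself directed upward (given two lower bounds $\ell_1,\ell_2$, their ``join relative to $S$'' exists because locally, on each orthogonal piece where they are comparable to the common part of $S$, one can take the larger) and that a maximal such lower bound exists by the Dedekind completeness of the projection lattice together with cancellation.

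The step I expect to be the genuine obstacle is exactly this construction of the join of two lower bounds, i.e.\ showing the set of lower bounds of $S$ is a directed set with a maximum, rather than merely verifying that a guessed-at element is the infimum. This requires a real use of the sheaf axioms — specifically turning the pushout-type cocones (a) into pullbacks in $\cat{Set}$ — to ``amalgamate'' partial information about $\ell_1$ and $\ell_2$ over orthogonal subprojections into a single larger lower bound, and then invoking Dedekind completeness of finite projections in $M$ to extract a top element of this directed family. Everything else (the reduction from suprema to infima, uniqueness of differences via cancellation from \Cref{prop.monoid}, the poset axioms from \Cref{lem.le}, and the bookkeeping of moving representatives along pseudoisomorphisms) should be routine once this amalgamation lemma is in hand.
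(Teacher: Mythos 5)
Your overall architecture is partly sound: reducing least upper bounds to greatest lower bounds via $\bigvee S=\bigwedge\{\text{upper bounds of }S\}$ is legitimate here (indeed every nonempty subset of $\pi(\cF)$ is bounded below by $0$), and it is a slightly different, arguably cleaner reduction than the paper's, which instead subtracts the $[x_i]$ from a fixed upper bound $[x]$ and takes $\bigwedge([x]-[x_i])$. You are also right that the pushout-type cocones of \Cref{def.can_sketch} are the tool that must carry the comparison of lower bounds. But the proposal has a genuine gap, and you name it yourself: the amalgamation step is never carried out. In the paper this is precisely the content of \Cref{lem.greatest_lower}: given a maximal lower bound $[y]$ and another lower bound $[z]$ with $[y]\wedge[z]=0$, one uses the pullback property over the algebra $A$ generated by $p,q,r$ to produce a single element $w\in\cF(A)$ restricting to the two decompositions of $x$, observes $q\wedge r=0$, and then moves $r$ orthogonally to $q$ by the partial isometry in the polar decomposition of $(p-q)r$, concluding $[y]+[z]\le[x_i]$; this is the step that makes everything work, and deferring it leaves the proof's core missing. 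Note also that your framing via ``directedness of the set of lower bounds'' is essentially equivalent to this lemma, not a way around it.

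The second, less visible gap is your final step: extracting a maximal (or greatest) lower bound from the directed family ``by Dedekind completeness of the finite projection lattice together with cancellation.'' Elements of $\pi(\cF)$ are not determined by their supporting projections (cf.\ \Cref{rem.proj_monoid}), so completeness of the projection lattice of $pMp$ does not by itself produce an element of $\cF$ over a limiting projection; one must actually build that element. The paper does this in \Cref{lem.max_lower} by a transfinite recursion that peels off mutually orthogonal pieces $x_i^\beta\in\cF(p_i^\beta)$ inside \emph{each} $p_i$, coherently intertwined by partial isometries $u_{ji}^\beta$, and then assembles them using the coproduct-type (infinite orthogonal sum) sheaf condition of \Cref{rem.inf_sums}; cancellation and the strict growth of the orthogonal families force termination. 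Your proposal contains no substitute for this bookkeeping, and your preliminary normalization is also off: there is in general no single finite projection $p$ with all $1_{A_s}$, $s\in S$, representable under it (an arbitrary family need not be jointly dominated), nor do Murray--von Neumann classes of finite projections in a non-factor $M$ obviously admit meets --- fortunately neither is needed, since the recursion works inside each $p_s$ separately. As it stands, then, the proposal is a plausible plan whose two load-bearing steps (the amalgamation lemma and the construction of a maximal lower bound) are missing.
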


The proof consists of a series of lemmas, which we then assemble together to get the conclusion. First, we tackle the issue of greatest lower bounds. The next result gets us only partly there.

\begin{lemma}\label{lem.max_lower}
Let $x_i\in\cF(p_i)$, $i\in I$ be an arbitary non-empty family for projections $p_i\in M$. The set of lower bounds for the classes $[x_i]\in\pi(\cF)$ has maximal elements. 
\end{lemma}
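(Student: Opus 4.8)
The goal is to show that the set $L$ of lower bounds for a non-empty family $\{[x_i]\}_{i\in I}$ in $\pi(\cF)$ has maximal elements. The natural approach is Zorn's lemma: $L$ is non-empty since $0\in L$ (as $0\le[x_i]$ for every $i$, because $0+[x_i]=[x_i]$), so it suffices to show every chain in $L$ has an upper bound in $L$. Let me think about how to produce such an upper bound.

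Let me try to prove this. Let $\{y_\alpha\}$ be a chain in $L$, where I may assume each $y_\alpha\le y_\beta$ or $y_\beta\le y_\alpha$. The key structural observation is that if $y\le y'$ in $\pi(\cF)$, then (by cancellation, Proposition 1.3) the "difference" element $z$ with $y+z=y'$ is unique, so the chain really looks like an increasing sequence of pieces being glued on. I want to build a candidate least upper bound by summing up these incremental pieces. Concretely, well-order the chain, and for each $\alpha$ let $z_\alpha$ be the unique element with $y_{<\alpha}+z_\alpha=y_\alpha$ where $y_{<\alpha}$ is the (inductively constructed) sum of the earlier pieces; then the $z_\alpha$ have supporting projections that can be arranged mutually orthogonally, all sitting inside the supporting projection of any fixed $[x_{i_0}]$ (since each $y_\alpha\le[x_{i_0}]$, so each $y_\alpha$'s supporting projection embeds into that of $[x_{i_0}]$ — see Remark 1.6). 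Using Remark 1.7 on possibly-infinite sums of mutually orthogonal families, form $y:=\sum_\alpha z_\alpha\in\pi(\cF)$. This $y$ is an upper bound for the chain by construction, and I claim $y\in L$, i.e. $y\le[x_i]$ for each $i$.

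The main obstacle is exactly this last claim: I must show that the infinite sum $y$ of the incremental pieces is still $\le[x_i]$. The finite case is immediate from the definition of $\le$ and associativity, but passing to the infinite sum requires that the "complementary" pieces also glue up coherently. The tool should be the sheaf condition on coproduct-type cocones (Definition 1.3(b)): if I can exhibit, inside some $1_{A}M1_{A}$ realizing $[x_i]$, an orthogonal decomposition of the supporting projection into the piece carrying $y$ and a complementary piece, and check that the restriction of the element realizing $[x_i]$ to the first piece is $y$, then the element on the complementary piece witnesses $y\le[x_i]$. Making this compatible across all $i\in I$ simultaneously — i.e. choosing the realizations coherently — is the delicate point; I expect to fix one index $i_0$, do the construction of $y$ there so that $y\le[x_{i_0}]$ by the sheaf gluing, and then note that for every other $i$ we already have $y_\alpha\le[x_i]$ for all $\alpha$ in the chain, and argue (again via the coproduct sheaf condition, comparing supporting projections and using cancellation) that the supremum-along-the-chain element $y$ inherits $y\le[x_i]$.

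Once the chain condition is verified, Zorn's lemma yields a maximal element of $L$, which is the assertion of the lemma. I would present the argument in this order: (1) note $L\ne\emptyset$; (2) set up an arbitrary chain and well-order it; (3) construct the incremental pieces $z_\alpha$ via cancellation and verify their supporting projections can be taken mutually orthogonal inside a fixed realizing algebra; (4) form $y$ using Remark 1.7 and check it bounds the chain; (5) the crux — use the coproduct-type sheaf condition to verify $y\in L$; (6) invoke Zorn. Note that this lemma deliberately gives only \emph{maximal} lower bounds rather than a \emph{greatest} one; the upgrade to an actual greatest lower bound (needed for Theorem 1.8) presumably comes in a subsequent lemma, by showing any two maximal lower bounds are comparable, so I will not attempt that here.
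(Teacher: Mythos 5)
Your reduction to Zorn's lemma does not close the argument: the whole difficulty of the lemma is concentrated in your step (5), and the sketch you give there would not survive being written out. Knowing that each $y_\alpha\le[x_i]$ and that your element $y$ is \emph{an} upper bound of the chain does not yield $y\le[x_i]$; that implication would require $y$ to be the \emph{least} upper bound of the chain, and the existence and identification of least upper bounds is precisely what this section is in the process of proving --- it is not available here. Nor does ``comparing supporting projections'' help: subequivalence of the supporting projection of $y$ to that of $x_i$ is far weaker than $y\le[x_i]$ in $\pi(\cF)$. The underlying problem is that an abstract chain of lower bounds comes with no compatible realizations inside the $\cF(p_i)$'s. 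To check that the chain's limit element is still a lower bound for \emph{every} $i$, you would need to choose, step by step along the chain, realizations of the increments $z_\alpha$ as mutually orthogonal summands inside \emph{all} the $x_i$ simultaneously, intertwined by partial isometries, and maintain this coherence through limit stages --- at which point you are reproducing the paper's transfinite construction inside the verification of the chain condition, so the Zorn reduction has bought nothing. (Two smaller defects: an arbitrary chain cannot in general be well-ordered compatibly with its order, so ``well-order the chain'' must be replaced by ``pass to a cofinal well-ordered subchain''; and at limit stages of your incremental construction the next increment only exists if the accumulated sum is already known to be $\le y_\lambda$ with a compatible realization, which is the same unproven coherence claim appearing one level down.)

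For comparison, the paper's proof of \Cref{lem.max_lower} avoids abstract chains entirely: it runs a transfinite recursion that carves out, inside each $x_i\in\cF(p_i)$ simultaneously, an increasing family of mutually orthogonal summands $x_i^\beta\in\cF(p_i^\beta)$ together with partial isometries $u_{ji}^\beta$ intertwining them across the indices $i,j$, each new summand being a nonzero common lower bound of the current residuals. The process must terminate since the orthogonal families grow strictly; the resulting sum is a common lower bound of the $[x_i]$ because its realizations as summands of the $x_i$ are built in from the start (via the coproduct-type sheaf condition), and it is maximal because otherwise the recursion could be continued. That built-in coherence of realizations is exactly what your step (5) lacks.
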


\begin{remark}
The assertion is not that a greatest lower bound exists, which is the ultimate goal, but rather that maximal elements exist. These are elements not dominated strictly by any other element, but they themselves need not dominate everyone else, a priori. 
\end{remark}

\begin{proof}
We construct such a maximal element by a transfinite recursive process. The steps are indexed by ordinal numbers $\alpha$, and the $\alpha$'th step produces ever-increasing families $p_i^\beta$, $\beta<\alpha$ of projections and elements $x_i^\beta\in\cF(p_i^\beta)$ such that
	\begin{itemize}
		\item For each $i\in I$, the projections $p_i^\beta\le p$, $\beta<\alpha$ are mutually orthogonal;
		\item The $x_i^\beta$'s are summands of the $x_i$'s in the sense of \Cref{rem.inf_sums};
		\item For every ordinal number $\beta<\alpha$ and every pair $i,j\in I$ of indices, there are isomorphisms $u_{ji}^\beta: \bC p_i^\beta\cong \bC p_j^\beta$ in $\cC_M$ ($u_{ji}^\beta$ being partial isometries); 
		\item The isomorphisms $u_{ji}^\beta$ intertwine the objects $x_i^\beta$, in the sense that $x_i^\beta$ is equal to $\cF(u_{ji}^\beta)(x_j^\beta)$. 
	\end{itemize}

	\[
		\tikz[anchor=base]{
  		\path (0,0) node {$p_i$} +(2,0) node {$p_j$} +(0,1) node [minimum height=1cm] (1) [draw] {$p_i^0$} +(2,1) node [minimum height=1cm] (2) [draw] {$p_j^0$} +(0,2) node [minimum height=1cm] (3) [draw] {$p_i^1$} +(2,2) node [minimum height=1cm] (4) [draw] {$p_j^1$} +(0,3) node {$\vdots$} +(2,3) node {$\vdots$} +(0,4) node [minimum height=1cm] (5) [draw] {$p_i^\alpha$} +(2,4) node [minimum height=1cm] (6) [draw] {$p_j^\alpha$} +(0,5) node {$\vdots$} +(2,5) node {$\vdots$} +(-1,1) node [minimum height=1cm] {$x_i^0$} +(3,1) node [minimum height=1cm] {$x_j^0$} +(-1,2) node [minimum height=1cm] {$x_i^1$} +(3,2) node [minimum height=1cm] {$x_j^1$} +(-1,4) node [minimum height=1cm] {$x_i^\alpha$} +(3,4) node [minimum height=1cm] {$x_j^\alpha$};  	
  		\draw[->] (1) -- (2) node[pos=.5,auto] {$\scriptstyle u_{ji}^0$};
  		\draw[->] (3) -- (4) node[pos=.5,auto] {$\scriptstyle u_{ji}^1$};
  		\draw[->] (5) -- (6) node[pos=.5,auto] {$\scriptstyle u_{ji}^\alpha$};	
 		}
 	\]
In step $\alpha=0$ there is nothing to do: We already have an empty family of projections. Next, we need to describe what to do in step $\alpha$, having completed all previous steps.

{\bf Case 1:} Assume first that $\alpha$ is a successor ordinal, with predecessor $\alpha_-$. We have the families $p_i^\beta$, $x_i^\beta$ for $\beta<\alpha_-$ from the previous inductive step. By mutual orthogonality, we can make sense, for each $i$, of the infinite sum $y_i$ of all $x_i^\beta$'s (see \Cref{rem.inf_sums}). It is an element of $\cF(q_i)$ for $q_i=\bigoplus_\beta A_i^\beta$. 

Now denote $r_i=p_i-q_i$. By the sheaf condition demanding that $\cF$ turn direct sums of orthogonal subalgebras into direct products, we can find $z_i\in\cF(r_i)$ such that $x_i=y_i+z_i$. If the set $[z_i]$, $i\in I$ has no non-zero lower bounds, terminate the induction. If there is some non-zero lower bound, then we would be able to find non-zero subprojections $r'_i\le r_i$, elements $w_i\in\cF(r'_i)$, and isometries $u_{ji}^{\alpha_-}$ in $\cC_M$ between the $r'_i$'s intertwining the $w_i$'s such that the common class $[w_i]$ is a lower bound for all $z_i$. The prescription for the recursion is to take $x_i^{\alpha_-}=w_i$ and $p_i^{\alpha_-}=r'_i$.   

{\bf Case 2:} Now suppose $\alpha$ is a limit ordinal. Since the previous steps $\beta<\alpha$ already provide a family of subalgebras indexed as desired (by all ordinals less than $\alpha$), simply do nothing and move on to the next ordinal number.

Since successor ordinals increase the family of mutually orthogonal non-zero subalgebras strictly, the process must stop at some point. When it does denote \[ \cF\left(\bigoplus_\beta p_i^\beta\right)\ni y_i=\sum_\alpha x_i^\alpha. \] The resulting elements $[y_i]\in\pi(\cF)$ coincide because the $y_i$'s are intertwined by the $\cC_M$-isomorphisms $\bigoplus_\beta u_{ji}^\beta$. Moreover, this common $[y_i]$ is by construction majorized by all $[x_i]$'s and is maximal with this property (or else we could continue the recursion). 
\end{proof}

We are now ready to complete the greatest lower bound half of \Cref{th.main}.

\begin{lemma}\label{lem.greatest_lower}
For any family $x_i\in\cF(p_i)$, the set $\{[x_i]\}\subseteq \pi(\cF)$ has a greatest lower bound. 
\end{lemma}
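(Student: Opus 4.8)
The plan is to leverage \Cref{lem.max_lower}: it produces a \emph{maximal} lower bound for the family $\{[x_i]\}$, and I want to upgrade ``maximal'' to ``greatest''. The key point should be that the partial order on $\pi(\cF)$ is such that any two maximal lower bounds of a bounded-below family must coincide, which then forces the maximal element to dominate all other lower bounds as well. So the real content is a comparison lemma: if $[a]$ and $[b]$ are both lower bounds of the $[x_i]$, then they have a common lower bound $[c]$ with $[c]\ge$ enough of each — more precisely, I would try to show that the lower bounds of $\{[x_i]\}$ form a directed set (any two have a common upper bound \emph{within} the set of lower bounds). Once the set of lower bounds is directed and has a maximal element, that maximal element is automatically the greatest.

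First I would fix a maximal lower bound $[y]$, say $y\in\cF(q)$ realized as a summand $x_i = y + z_i$ for each $i$ after conjugating $q$ into each $p_i$ appropriately (this is exactly the output of the construction in \Cref{lem.max_lower}: the $y_i$'s are summands of the $x_i$'s). Now let $[a]$ be any other lower bound, with $x_i = a + b_i$. I want to compare $[a]$ with $[y]$. The idea is to run the greatest-lower-bound construction of \Cref{lem.max_lower} \emph{relative to} $[a]$: apply a version of that argument to the family $\{[x_i]\}$ but starting the transfinite recursion with $[a]$ already extracted as the zeroth layer (legitimate since $[a]$ is a lower bound, so each $x_i$ splits off a copy of $a$). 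The construction then produces a maximal lower bound $[y']$ with $[a]\le [y']$. If I can argue that the maximal lower bound produced is independent of such choices — i.e. $[y']=[y]$ — then $[a]\le [y]$ and we are done, since $[y]$ was an arbitrary maximal lower bound and $[a]$ an arbitrary lower bound.

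Concretely, to see independence: suppose $[y]$ and $[y']$ are two maximal lower bounds. I would like a common lower bound $[w]$ with $[y]\le[w]$ and $[y']\le[w]$; maximality of each then gives $[y]=[w]=[y']$. To build $[w]$, realize $y\in\cF(q)$ and $y'\in\cF(q')$ as summands of all the $x_i$ simultaneously; the obstacle is that $q$ and $q'$ (conjugated into $p_i$) need not be comparable or orthogonal. Here is where I expect the main difficulty. I would try to ``intersect'' the two decompositions: using the sheaf/product conditions and cancellation (\Cref{prop.monoid}), decompose $x_i = a_i + b_i + c_i$ where $a_i$ is the part ``shared'' by the $y$-decomposition and the $y'$-decomposition, then show $[a_i]$ is a common lower bound dominating both $[y]$ and $[y']$ — but this ``shared part'' is precisely a greatest-lower-bound-type construction again, so there is a real risk of circularity. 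The cleaner route, which I would pursue, is: run the \Cref{lem.max_lower} recursion but at each successor step, instead of extracting an \emph{arbitrary} nonzero common lower bound $[w_i]$ of the residuals $[z_i]$, extract one that is itself $\ge$ the corresponding residual piece of a \emph{fixed} reference lower bound $[a]$ whenever that residual piece is nonzero; a transfinite bookkeeping argument (the reference residuals shrink and must vanish, since their supporting projections are finite and strictly decreasing along the successor steps) shows the process still terminates and now yields a maximal lower bound $\ge [a]$. Applying this with $[a]$ ranging over all lower bounds, and invoking cancellation to see the terminal element does not depend on the bookkeeping choices, gives that the (unique) maximal lower bound dominates every lower bound, i.e. is the greatest lower bound.

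The main obstacle, then, is exactly this coherence step — showing the transfinite construction can be made to ``absorb'' an arbitrary prescribed lower bound without the process failing to terminate, and that different runs land on the same maximal element. I expect cancellability and the finiteness of supporting projections (ensuring no infinite strictly-decreasing chain of orthogonal nonzero pieces) to be the two crucial inputs, just as finiteness was crucial for \Cref{prop.monoid}.
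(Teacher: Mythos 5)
There is a genuine gap. Your argument reduces the lemma to the claim that the set of lower bounds of $\{[x_i]\}$ is directed (equivalently, that all maximal lower bounds coincide and dominate every lower bound), but you never actually prove that claim: the proposed mechanism --- rerunning the recursion of \Cref{lem.max_lower} with bookkeeping that ``absorbs'' a fixed reference lower bound $[a]$, and then asserting that ``different runs land on the same maximal element'' by cancellation --- is precisely the statement to be proved, not a proof of it. Even the bookkeeping itself is ill-posed: after the first extraction of a layer $w_i$, the ``residual piece of $[a]$'' is only defined via some common part of $a$ and $w_i$, which is again a meet-type construction; this is exactly the circularity you flagged in your ``intersect the two decompositions'' paragraph, and the modified recursion does not escape it. In addition, your termination argument invokes ``no infinite strictly-decreasing chain of orthogonal nonzero pieces'' from finiteness of the supporting projections; that principle is false (a $\mathrm{II}_1$ factor has infinite strictly decreasing chains of nonzero projections). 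The termination in \Cref{lem.max_lower} comes from a cardinality bound on families of mutually \emph{orthogonal} nonzero projections under a fixed projection, which your modified scheme would need but does not obviously retain.

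The missing idea is a concrete merging step for two lower bounds of the \emph{same} element, and this is where the pushout-type cocones of \Cref{def.can_sketch} --- which your proposal never uses --- are essential. The paper proceeds as follows: take a maximal lower bound $[y]$ and an arbitrary lower bound $[z]$, subtract a maximal common lower bound of the pair so that one may assume $[y]$ and $[z]$ have no nonzero common lower bound, and then, for each $i$, realize $y$ and $z$ as summands of $x_i$ supported on subprojections $q,r\le p_i$. The pushout sheaf condition glues the two decompositions $(y,x_i-y)$ and $(z,x_i-z)$ into a single element $w\in\cF(A)$ over the algebra $A$ generated by $p_i,q,r$; the no-common-summand assumption forces $q\wedge r=0$, and the polar-decomposition partial isometry of $(p_i-q)r$ inside $A$ moves $r$ orthogonally to $q$ without changing $[x_i]$, yielding $[y]+[z]\le[x_i]$ for every $i$. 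Maximality of $[y]$ then gives $[z]=0$, i.e.\ $[z]\le[y]$ in the unreduced situation. Some argument of this kind, producing a single lower bound dominating both of two given ones, is indispensable; without it, \Cref{lem.max_lower} and cancellation from \Cref{prop.monoid} only tell you that every lower bound sits below \emph{some} maximal lower bound, which does not yield a greatest one.
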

\begin{proof}
Let $[y]$ be a maximal lower bound for the $[x_i]$'s, provided by \Cref{lem.max_lower}. We can think of $[y]$ as the common class $[y_i]$ for some $y_i\in\cF(q_i)$, $q_i\le p_i$. Letting $[z]=[z_i]\le [x_i]$ be the common class of $z_i\in\cF(r_i)$ for some subprojections $r_i\le p_i$, we have to show that $[z]\le [y]$ in $\pi(\cF)$. 

Cancellability allows us to subtract a maximal lower bound of $[y]$ and $[z]$ from all $[x_i]$, and hence to assume without loss of generality that $[y]\wedge [z]=0$. Under this hypothesis, the new goal is to show that $[z]=0$. 

Now fix $i\in I$. We will be working with $x_i$, $y_i$, $p_i$, etc., but drop the subscript to streamline the notation.

The fact that $[x]$ has both $[y]$ and $[z]$ as summands means that $x\in\cF(p)$ is in the image of both $\cF(\bC q\oplus\bC(p-q))\to\cF(p)$ and $\cF(\bC r\oplus\bC(p-r))\to\cF(p)$ (the maps associated by $\cF$ to the inclusions of $\bC p$ into the two $\le 2$-dimensional algebras $A_q$ and $A_r$ obtained by cutting $p$ by $q$ and $r$ respectively). The pushout-type sheaf condition (a) of \Cref{def.can_sketch} says that the diagram
	\[
		\tikz[anchor=base]{
  		\path (0,0) node (1) {$\cF(p)$} +(2,1) node (2) {$\cF(A_q)$} +(2,-1) node (3) {$\cF(A_r)$} +(4,0) node (4) {$\cF(A)$};
  		\draw[<-] (1) -- (2);
  		\draw[<-] (1) -- (3);
  		\draw[<-] (2) -- (4);
  		\draw[<-] (3) -- (4); 
 		}
	\]
is a pullback in $\cat{Set}$, where $A$ is the algebra generated by $p$, $q$ and $r$. By a slight abuse of notation, we write $x-y\in\cF(p-q)$ and $x-z\in\cF(p-r)$ for the summands of $x$ complementary to $y$ and $z$ respectively. The pairs \[ (y,x-y)\in\cF(A_q)\cong \cF(q)\times\cF(p-q) \] and \[ (z,x-z)\in\cF(A_r)\cong \cF(r)\times\cF(p-r) \] both get mapped to $x\in\cF(p)$ through the left hand side of the above diagram, and so, by the pullback property, there is a unique $w\in\cF(A)$ mapped to $(y,x-y)$ and $(z,x-z)$ by the right hand side of the same diagram.   

Observe that $q$ and $r$ intersect trivially. Indeed, the intersection is a central projection in $A$, and the summand of $w$ corresponding to this central projection is a common summand of $y$ and $z$. The claim follows from our assumption $[y]\wedge [z]=0$.

Now, since $q,r$ are projections with trivial intersection in a von Neumann algebra $A$, there is a partial isometry $u\in A$ moving $r$ orthogonally to $q$: Simply take $u$ to be the partial isometry component in the polar decomposition of $(p-q)r$ (this is a familiar trick; see e.g. \cite[V.1.6]{MR1873025}). Since $u$ implements an automorphism of $A\in\cC_M$, it doesn't change the class $[w]=[x]$. But then it follows that $[y]+[z]\le [x]$, and so, by the maximality of $[y]=[z]=0$.  
\end{proof}

We are now ready to prove the main result of this section.

\begin{proof of main}
\Cref{lem.greatest_lower} takes care of half the theorem. The result still left to prove is that for an arbitrary family $x_i\in\cF(p_i)$, $i\in I$ such that $[x_i]$ have a common upper bound $[x]$, $x\in\cF(p)$, the set $\{[x_i]\}$ has a least upper bound. 

By \Cref{lem.greatest_lower}, the $[x]-[x_i]$'s (which make unambiguous sense by the cancellability claim in \Cref{prop.monoid}) have a greatest lower bound, say $[x]-[y]$ for some $[y]\le[x]$. The claim now is that $[y]$ is a least upper bound for the $[x_i]$'s. In other words, we want to show \[ [z]\ge [x_i],\ \forall i\Rightarrow [z]\ge [y]. \] 

Substituting $[z]\wedge[x]$ for $[z]$ (the former exists by \Cref{lem.greatest_lower} again), we may as well assume $[z]\le [x]$. But then $[z]\ge [x_i]$ is equivalent to $[x]-[z]\le [x]-[x_i]$, which in turn, by the greatest lower bound property of $[x]-[y]$, is equivalent to $[x]-[z]\le [x]-[y]$. In turn, this gets us the desired result $[z]\ge[y]$.
\end{proof of main}

Finally, the following observation is needed in the next section.

\begin{proposition}\label{prop.wedge_vee}
For any sheaf $\cF$ on a von Neumann algebra $M$ and any $y,z\in\pi(\cF)$ with a common upper bound, we have 							\begin{equation}\label{eq.wedge_vee} 
		y\vee z - y = z-y\wedge z. 
	\end{equation} 
\end{proposition}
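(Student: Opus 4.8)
The plan is to reduce to the special case $y\wedge z=0$, in which the asserted identity becomes the statement ``$y+z$ is defined and $y+z=y\vee z$'' --- and this special case is, in essence, already contained in the proof of \Cref{lem.greatest_lower}.

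First I would record a bookkeeping lemma, proved from the cancellability of \Cref{prop.monoid} alone: if $m\le u$, $m\le v$, and $u,v$ have a common upper bound, then $(u\vee v)-m=(u-m)\vee(v-m)$. Indeed, writing $u\vee v=m+(u-m)+k$ (using $u\vee v\ge u=m+(u-m)$) and cancelling $m$ shows $(u\vee v)-m$ dominates $u-m$, and likewise $v-m$; conversely, if $w$ dominates both $u-m$ and $v-m$, then $m+w$ dominates $u$ and $v$, hence $u\vee v$, and cancelling $m$ gives $w\ge(u\vee v)-m$. Applying this with $u=y$, $v=z$, $m=y\wedge z$ and setting $y'=y-y\wedge z$, $z'=z-y\wedge z$ yields $y\vee z=(y\wedge z)+(y'\vee z')$, while $y=(y\wedge z)+y'$ by definition. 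Moreover $y'\wedge z'=0$: a common lower bound $e$ of $y',z'$ satisfies $(y\wedge z)+e\le y$ and $(y\wedge z)+e\le z$ (via the defining equations for $y',z'$ and the associativity axiom for the partial addition), hence $(y\wedge z)+e\le y\wedge z$, so $e=0$ by cancellation. Regrouping $(y\wedge z)+(y'\vee z')=\big((y\wedge z)+y'\big)+\big((y'\vee z')-y'\big)$ shows $y\vee z-y=(y'\vee z')-y'$, so \Cref{eq.wedge_vee} for $(y,z)$ is equivalent to the same identity for $(y',z')$. We may therefore assume $y\wedge z=0$, in which case \Cref{eq.wedge_vee} reads $y\vee z-y=z$, i.e. $y+z$ is defined and $y+z=y\vee z$.

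For this last statement I would use \Cref{th.main} to form $x:=y\vee z$ (it exists since $y,z$ have a common upper bound). Since $y\wedge z=0$, the argument in the proof of \Cref{lem.greatest_lower} applies verbatim to the elements $y,z$ and their common upper bound $x$: fixing a representative $\widetilde x\in\cF(\bC p)$, realizing $y$ and $z$ as summands of $\widetilde x$ cut out by subprojections $q,r\le p$, and invoking the pushout-type sheaf condition \Cref{def.can_sketch}(a), one produces $w\in\cF(A)$ with $A$ the subalgebra of $M$ generated by $p,q,r$, whose summand over the central projection $q\wedge r$ of $A$ would be a common summand of $y$ and $z$; since $y\wedge z=0$ this forces $q\wedge r=0$ (an element of $\cF(\bC s)$ with $s\neq 0$ has nonzero class in $\pi(\cF)$, as a pseudoisomorphism $\bC s\to\{0\}$ would force $s=0$). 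A partial isometry in $A$ then moves $r$ orthogonally to $q$ without changing the class $x=[w]$, exhibiting $y+z$ as defined with $y+z\le x=y\vee z$. The reverse inequality is immediate: $y\le y+z$ and $z\le y+z$ give $y+z\ge y\vee z$. Hence $y+z=y\vee z$, which settles the reduced case and, with it, the proposition.

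The substance of the argument thus lies entirely in \Cref{lem.greatest_lower}; the genuinely new content is only the elementary reduction. I expect the main obstacle in writing this up to be notational rather than conceptual: keeping straight which sums are \emph{a priori} defined (hence exactly where the partial-monoid associativity axiom is invoked), and ensuring that the representatives of $y$ and $z$ fed into the appeal to \Cref{lem.greatest_lower} are summands of one and the same chosen representative of $y\vee z$.
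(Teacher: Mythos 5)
Your overall route is exactly the paper's: reduce to the case $y\wedge z=0$ via the cancellation identity $(y\vee z)-(y\wedge z)=(y-y\wedge z)\vee(z-y\wedge z)$, and then, in the reduced case, show $y+z$ is defined and equals $y\vee z$ by re-running the orthogonalization argument from the proof of \Cref{lem.greatest_lower}. The second half of your write-up, including the verification that $y'\wedge z'=0$ and the observation that a nonzero supporting projection forces a nonzero class in $\pi(\cF)$, is correct and matches the paper.

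There is, however, one step in your bookkeeping lemma that fails as written: in the converse direction you pass from ``$w$ dominates $u-m$ and $v-m$'' to ``$m+w$ dominates $u$ and $v$'', which presupposes that $m+w$ is defined. In a partial monoid this is not automatic, and in $\pi(\cF)$ it genuinely can fail: already for $\cF=\cat{W}^*_{\bC}$ on a $\2$ factor, $\pi(\cF)$ identifies with the interval $[0,1]$ of traces of supporting projections, with $a+b$ defined only when $a+b\le 1$; taking $m=1/2$, $u=v=3/5$ and $w=9/10$, the sum $m+w$ does not exist (though the desired conclusion $w\ge(u\vee v)-m$ of course still holds). The repair is routine and is essentially what the paper does: rather than testing against an arbitrary upper bound $w$, set $s=(u-m)\vee(v-m)$, use your first half to get $s\le(u\vee v)-m$, deduce from this that $m+s$ is defined and $\le u\vee v$, check that $m+s$ dominates both $u$ and $v$, and conclude $m+s=u\vee v$ by leastness of the join; alternatively, replace $w$ by $w\wedge\bigl((u\vee v)-m\bigr)$ (which exists by \Cref{lem.greatest_lower} and is still an upper bound of $u-m$ and $v-m$) before adding $m$. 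With that one repair your argument coincides with the paper's proof.
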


As usual, subtraction makes sense by cancellation. The greatest lower bound always exists by \Cref{lem.greatest_lower}, while the least upper bound exists by \Cref{th.main} and the assumption that $y$ and $z$ have at least one common upper bound.

\begin{proof}
First, we reduce to the case $y\wedge z=0$.

Note that $y\vee z-y\wedge z$ is greater than both $y-y\wedge z$ and $z-y\wedge z$, and hence \[ y\vee z-y\wedge z\ge (y-y\wedge z)\vee(z-y\wedge z). \] in particular, the sum 
	\begin{equation}\label{eq.long_wedge} 
		(y-y\wedge z)\vee(z-y\wedge z) + y\wedge z 
	\end{equation}
is defined, and it is at most $y\vee z$. On the other hand, \Cref{eq.long_wedge} is clearly greater than (or equal to) both $y$ and $z$, and so 
	\begin{equation}\label{eq.long_wedge2} 
		(y-y\wedge z)\vee(z-y\wedge z) = y\vee z - y\wedge z. 
	\end{equation} 
Now, denoting $y-y\wedge z$ by $y'$ and $z-y\wedge z$ by $z'$, \Cref{eq.wedge_vee} becomes (via \Cref{eq.long_wedge2}) $y'\vee z' = y'+z'$, under the assumption $y'\wedge z'=0$. We henceforth revert to $y$ and $z$, make the additional assumption $y\wedge z=0$ (as announced at the beginning of the proof), and seek to show $y\vee z=y+z$. 

The proof of the latter claim is essentially contained in that of \Cref{lem.greatest_lower}. Realize $y$, $z$ and some common upper bound ($y\vee z$, say) as classes of elements in $\cF(q)$, $\cF(r)$ and $\cF(p)$ respectively, where $p$ is a projection in $M$ and $q,r\le p$ are sub-projections. Then, as in the proof of the lemma, $r$ can be moved orthogonally to $q$ by some partial isometry in the algebra $A$ generated by $q$, $r$ and $p$. The conclusion, as in the last line of the proof of the lemma, is $y+z\le y\vee z$.   
\end{proof}

We end this section by pointing to one possible interpretation of \Cref{th.main}.

Specialize for the remainder of the section to the main case studied in \cite{MR2799809}: $M$ is an ultrapower $R^\omega$ of the hyperfinite $\2$ factor $R$ (see \cite[$\S$2]{2010arXiv1003.2076C}) and $N$ is a separable $\2$ factor. The invariant studied by Brown is a close relative of $\pi(\cat{W}^*_N)$; it is defined in precisely the same way, out of only \define{unital} morphisms $N\to M$. For this reason, we denote it by $\pi_u$. 

Although a priori it might not be clear what kind of structure one should expect to have on $\pi_u$, \cite[4.6]{MR2799809} makes it into a convex space, in the sense that one can define linear combinations with non-negative real coefficients adding up to $1$ in a consistent manner (see \cite[2.1]{MR2799809}). Now that convex combinations make sense, so does the notion of extreme point, and \cite[5.2]{MR2799809} argues that a unital morphism $N\to M$ has factorial commutant $N'\cap M$ if and only if the class of the morphism in $\pi_u$ is an extreme point. In conclusion, tbere is a connection between (a) a question by Sorin Popa of whether factors $N$ admitting an embedding into $M$ also admit one with factorial commutant and (b) the geometry of the convex set $\pi_u$. 

Consider now the sheaf $\cat{W}^*_N$ based on $M=R^\omega\otimes\cB(\ell^2)$, the corresponding monoid $\pi$ (no longer just partial; addition is always defined becaue tensoring with $\cB(\ell^2)$ produces enough room to add any two projections), and the enveloping group $K(\pi)$ of the partial obtained by formally adjoining additive inverses to elements of $\pi$. As a continuation of Brown's work, the convex structure on $\pi_u$ is extended in \cite{MR2997389} to a Banach space structure on $K(\pi)$ (more on this circle of ideas in the next section). $\pi_u$ sits inside this Banach space as a closed, convex, bounded subset. One possible approach to proving the existence of extreme points, then, would be to show that $K(\pi)$ is a dual Banach space; $\pi_u$ would then be compact in the weak$^*$ topology, and it would have plenty of extreme point by the Krein-Milman theorem. 

\Cref{th.main} can then be viewed as pointing roughly in the correct direction on the predual front. It implies that $K(\pi)$, with all of the structure just mentioned, is a Dedekind complete Banach lattice. This has some (admittedly flimsy) connection to being a dual, as the following result illustrates (originally due to Nakano; \cite[1.7]{MR1873025} or \cite[2.1.4]{MR1128093}):

\begin{proposition}
The Banach lattice $\cC_\bR(K)$ of real-valued continuous functions on a compact Hausdorff space $K$ is Dedekind complete in the sense of \Cref{def.ded} if and only if $K$ is Stonean, i.e. closures of open sets are open. \qedhere
\end{proposition}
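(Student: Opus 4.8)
The plan is to prove the two implications separately, relying on the standard correspondence between $\mathcal{C}_\mathbb{R}(K)$ and well-behaved subsets of $K$, together with the order-theoretic translation of suprema of functions into topological operations on their supports.

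First I would address the direction ``$K$ Stonean $\Rightarrow$ $\mathcal{C}_\mathbb{R}(K)$ Dedekind complete''. Take a non-empty family $\{f_i\}\subseteq\mathcal{C}_\mathbb{R}(K)$ bounded above by some $g\in\mathcal{C}_\mathbb{R}(K)$. Form the pointwise supremum $h(x)=\sup_i f_i(x)$; this is a bounded, upper semicontinuous function dominated by $g$, but a priori not continuous. The key step is to pass to its upper regularization $\tilde h(x)=\inf\{U\text{-}\sup h : U\ni x\text{ open}\}$, i.e.\ the smallest upper semicontinuous function $\ge h$ that is also lower semicontinuous at each point — equivalently, one shows that on a Stonean space the upper envelope of a bounded family of continuous functions, after this regularization, is again continuous. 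Concretely, for $a\in\mathbb{R}$ the set $\{\tilde h>a\}$ is the interior of the closure of $\bigcup_i\{f_i>a\}$, which is open since each $\{f_i>a\}$ is open and closures of open sets are open (so the closure $\overline{\bigcup_i\{f_i>a\}}$ is clopen, hence equal to its own interior). A parallel computation with $\{\tilde h\ge a\}$ via a complementary argument shows $\tilde h$ is continuous, and one checks $\tilde h=\bigvee_i f_i$ in $\mathcal{C}_\mathbb{R}(K)$: it dominates every $f_i$ by construction, and any continuous $k\ge f_i$ for all $i$ satisfies $k\ge h$ pointwise hence $k\ge\tilde h$ since $k$ is continuous (so equals its own upper regularization). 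The statement for lower-bounded families follows by negation.

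For the converse, ``$\mathcal{C}_\mathbb{R}(K)$ Dedekind complete $\Rightarrow$ $K$ Stonean'', I would argue contrapositively, or directly: let $U\subseteq K$ be open and consider the family $\mathcal{S}$ of all continuous functions $f$ with $0\le f\le 1$ supported inside $\overline U$ (or, more cleanly, the family of continuous $f$ with $0\le f\le \mathbbm{1}_U$ pointwise). This family is bounded above by $1$, so by hypothesis it has a least upper bound $f_0\in\mathcal{C}_\mathbb{R}(K)$. One shows $f_0$ is the characteristic function of $\overline U$: it is $1$ on $U$ by Urysohn (every point of $U$ is separated from $K\setminus U$), hence $1$ on $\overline U$ by continuity, and it must be $0$ off $\overline U$ because the constant-$0$-off-$\overline U$ truncation $f_0\cdot(\text{something vanishing on a neighborhood of a given point }x\notin\overline U)$ is still an upper bound and is strictly smaller unless $f_0(x)=0$. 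Thus $\mathbbm{1}_{\overline U}$ is continuous, which forces $\overline U$ to be clopen, i.e.\ open; since $U$ was arbitrary, $K$ is Stonean.

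The main obstacle I anticipate is the forward direction's continuity verification: showing that the regularized pointwise supremum $\tilde h$ is genuinely continuous (not merely semicontinuous) uses the Stonean hypothesis in an essential two-sided way — one needs both that $\{\tilde h > a\}$ is open (clear from ``closure of open is open'') \emph{and} that these sublevel/superlevel sets fit together correctly so that preimages of all open intervals are open, which requires a short argument that $\bigcap_{a<b}\overline{\{h>a\}}=\overline{\{h\ge b\}}$-type identities hold after regularization. Equivalently, one invokes the known fact that $\mathcal{C}_\mathbb{R}(K)$ for $K$ Stonean is exactly the order-complete $AM$-space realized as $\mathcal{C}_\mathbb{R}$ of the Stone space; citing Nakano's theorem as indicated (\cite[1.7]{MR1873025}, \cite[2.1.4]{MR1128093}) legitimately shortcuts this, and indeed since the proposition is attributed there, I would present the clean conceptual argument above and defer the finer point-set manipulations to those references rather than grinding them out.
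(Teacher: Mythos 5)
The paper does not actually prove this proposition: it is quoted as a classical theorem of Nakano, with the proof delegated to the cited references (Takesaki and Meyer-Nieberg). What you have written out is the standard textbook argument behind that citation, and its second half is correct as it stands: for open $U$, the least upper bound $f_0$ of $\{f\in\cC_\bR(K): 0\le f\le \chi_U \text{ pointwise}\}$ equals $1$ on $U$ by Urysohn and hence on $\overline U$ by continuity, and multiplying $f_0$ by a Urysohn function that is $1$ on $\overline U$ and vanishes at a given $x\notin\overline U$ produces a smaller continuous upper bound unless $f_0(x)=0$; so $f_0=\chi_{\overline U}$ is continuous and $\overline U$ is clopen.

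In the forward direction, however, two points need repair. First, the pointwise supremum $h=\sup_i f_i$ of continuous functions is \emph{lower} semicontinuous, not upper semicontinuous; the regularization $\tilde h(x)=\inf_{U\ni x}\sup_U h$ is the upper semicontinuous envelope of $h$, i.e.\ the smallest usc function above it (your description of it is garbled). Second, the displayed identity $\{\tilde h>a\}=\mathrm{int}\,\overline{\bigcup_i\{f_i>a\}}$ is false in general, even on Stonean spaces: take $K=\beta\bN$, let $f_n$ be the continuous extension of the function equal to $1/n$ at $n$ and $0$ elsewhere, and $a=0$; then the right-hand side is all of $\beta\bN$, while $\tilde h$ vanishes on $\beta\bN\setminus\bN$. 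The correct level-set computation is $\{\tilde h\ge a\}=\bigcap_{\varepsilon>0}\overline{\{h>a-\varepsilon\}}$ (giving upper semicontinuity with no hypothesis on $K$) together with $\{\tilde h>a\}=\bigcup_{\varepsilon>0}\overline{\{h>a+\varepsilon\}}$, and it is only in the latter that the Stonean hypothesis enters: each $\overline{\{h>a+\varepsilon\}}$ is clopen, so the union is open, $\tilde h$ is also lower semicontinuous, hence continuous, and then $\tilde h\ge f_i$ for all $i$ while any continuous upper bound $k$ satisfies $k\ge h$ pointwise and therefore $k=\tilde k\ge\tilde h$. With these corrections your outline is exactly the classical proof; since you ultimately defer the point-set details to the same sources the paper cites, the proposal is acceptable in spirit, but the identity as you stated it should not survive into a final write-up.
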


The reason why this is relevant is that being Stonean is only slightly weaker than $\cC(K)$ being a dual Banach space. Indeed, this latter condition is equivalent to the algebra $\cC_\bC(K)$ of \define{complex}-valued continuous functions on $K$ being $W^*$ (this is more or less \cite[III.3.5]{MR1873025}), which in turn is equivalent to $K$ being \define{hyper}stonean. This means that the space is stonean and in some sense has ``enough measures'' (\cite[1.14, 1.18]{MR1873025}). 

As acknowledged above, this is not strong evidence for $K(\pi)$ being a dual Banach space, but these ideas do seem to somehow fit together quite naturally, and the order-theoretic aspect of invariants such as $\pi$ complement the metric side of things (Banach space structures, etc.) very nicely.

\section{Divisibility and cone structures}\label{se.cone}

In some sense, the main theme in \cite{MR2997389} is the promotion of the ``discrete'' addition $+$ on a partial monoid of the form $\pi(\cat{W}^*_N)$ (notation as in \Cref{ex.Wast_mor}) to a ``continuous addition'' (roughly speaking an action of the semiring $(\bR_+,+,\cdot)$ of non-negative reals with the usual addition and multiplication) under certain conditions on the $W^*$-algebras $M$ and $N$. In a slightly less accurate way, this is also true of \cite{MR2799809}, where invariants analogous to $\pi(\cat{W}^*_N)$ constructed using only unital morphisms are endowed with convex structures (i.e. linear combinations with non-negative real coefficients adding up to $1$ are made sense of). We take up this same theme in the present section. 

First, we fix some terminology:

\begin{definition}\label{def.cone}
Let $(G,+,0)$ be a partial (abelian) monoid as in \Cref{def.part_mon}. A \define{partial cone structure} on $G$ is an operation $[0,1]\times G\to G$, $(r,g)\mapsto rg$ such that for $g,h\in G$ and $s,t\in[0,1]$:
	\renewcommand{\labelenumi}{(\alph{enumi})}
	\begin{enumerate}
		\item $0g=0\in G$;
		\item $1g=g$;
		\item $s(g+h) = sg+sh$ when either of the two sides is defined;
		\item $(s+t)g=sg+tg$ when $s+t\le 1$;
		\item $(st)g=s(tg)$.
	\end{enumerate} 
\end{definition}

\begin{remark}\label{rem.01}
This is the kind of structure considered in \cite[Appendix]{MR2997389}. A cone is more appropriately thought of as an action by the semiring $(\bR_+,+,\cdot)$ of non-negative integers rather than just $[0,1]$, hence the adjective `partial'. Nevertheless, since all cone structures considered here are of the kind introduced in \Cref{def.cone}, we sometimes drop that adjective.  
\end{remark}

Since cones allow for ``division'' by positive reals $\ge 1$ (and in particular by positive integers), the following term seems appropriate:

\begin{definition}\label{def.div}
An element $g$ of a partial abelian monoid $G$ is \define{divisible} if there exists an element $h\in G$ such that $h+h=g$. The monoid $G$ is divisible if all of its elements are. 
\end{definition}

\begin{remark}
A more precise term in the spirit of \cite{MR2846474} (a paper that will be relevant below, when we specialize to $G=\pi(\cF)$) would be `2-divisible'; $n$-divisibility would substitute $h+\ldots+h$ ($n$ times) for $h+h$ in \Cref{def.div}. There will never be any occasion to use the more elaborate notion, hence the simpler term `divisible'. 
\end{remark}

Let us return to the setup from before, with $M$ being a von Neumann algebra and $\cF$ a sheaf on it.

\begin{definition}\label{def.div_sheaf}
The sheaf $\cF$ is divisible if the monoid $\pi(\cF)$ with its partial monoid structure from \Cref{prop.monoid} is divisible in the sense of \Cref{def.div}. 
\end{definition}

\begin{remark}\label{rem.div_sheaf}
Unpacking the definition somewhat, $\cF$ is divisible if and only if for every projection $p\in M$, every element $x\in\cF(p)$ is in the image of $\cF(\iota):\cF(A)\to\cF(p)$ for some unital inclusion $\iota:\bC p\to A\cong M_2$.  
\end{remark}

As observed above, putting a cone structure on a partial monoid requires divisibility as a bare minimum. In the specific examples we are analyzing though, it turns out this is also enough:

\begin{theorem}\label{th.main_bis}
Let $M$ be a $W^*$-algebra, and $\cF$ a sheaf on it. The monoid $\pi=\pi(\cF)$ admits a cone structure if and only if $\cF$ is divisible. Moreover, when a cone structure exists, it is unique. 
\end{theorem}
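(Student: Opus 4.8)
The ``only if'' direction is immediate: if a cone structure exists then $g=\tfrac12 g+\tfrac12 g$ by axioms (b) and (d), so $\tfrac12 g$ witnesses the divisibility of $\pi=\pi(\cF)$.

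For the converse and for uniqueness, the plan is to first pin down \emph{halves}. The key lemma I would prove is that an element has at most one half: if $h+h=h'+h'=g$ then $h=h'$. To see this, set $a=h\wedge h'$ (available by \Cref{lem.greatest_lower}) and write $h=a+b$, $h'=a+b'$; one checks that $b\wedge b'=0$ and, after cancelling $2a$, that $2b=2b'$. Since $b\le 2b=2b'$, the element $2b'$ is a common upper bound of $b$ and $b'$, so $b\vee b'$ exists by \Cref{th.main}, and \Cref{prop.wedge_vee} gives $b\vee b'=b+b'$ (using $b\wedge b'=0$); comparing $b+b'=b\vee b'\le 2b$ and $\le 2b'$ and cancelling yields $b'\le b$ and $b\le b'$, hence $b=b'=0$ and $h=h'$. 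Iterating, every $g$ has at most one $2^n$-th part; under divisibility these exist for all $n$ (apply divisibility repeatedly), and I write $H$ for the resulting everywhere-defined halving operation. Along the way I would record three easy facts, all consequences of cancellation together with \Cref{th.main}: (i) adding a fixed element commutes with suprema that exist, i.e. $a+\bigvee S=\bigvee_{s\in S}(a+s)$ whenever the left-hand side is defined; (ii) consequently $x\mapsto kx$ and $x\mapsto H^n x$ commute with suprema; (iii) $\bigwedge_n H^n g=0$ for every $g$, since if $d$ denotes this infimum then $2d\le H^n g$ for all $n$ forces $2d\le d$, whence $d=0$.

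Existence of a cone structure then proceeds in two stages. On dyadic scalars $q=k/2^n\in[0,1]\cap\bZ[1/2]$ I define $qg:=k\cdot H^n g$ (a $k$-fold sum, defined because $2^n H^n g=g$ is), check independence of the representation of $q$, and verify axioms (a)--(e) for dyadic scalars directly from the partial monoid axioms and cancellation. I then put $rg:=\bigvee\{qg:q\le r\text{ dyadic}\}$ for arbitrary $r\in[0,1]$; this supremum exists by \Cref{th.main} because $g$ bounds the set (and equals $g$ when $r=1$, using (iii)). Axioms (a), (b), (e) and (d) for general scalars are obtained by pushing the dyadic identities through these suprema via (i)--(ii) — for instance $sg+tg=\bigvee_{q\le s,\,q'\le t}(q+q')g=\bigvee_{p\le s+t}pg=(s+t)g$ when $s+t\le 1$ — the only point needing real care being that $sg+tg$ is actually \emph{defined}: here one tracks supporting projections via \Cref{rem.proj_monoid}, noting that $sg$ and $tg$ can be realized with supporting projections ``$sp$'' and ``$tp$'' inside the supporting projection $p$ of $g$, which are jointly orthogonalizable precisely when $s+t\le 1$.

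Finally, uniqueness. Given any cone structure $c$, axioms (b) and (d) force $c(\tfrac12,g)$ to be a half of $g$, hence equal to $Hg$ by the key lemma, and then $c(q,g)=qg$ for every dyadic $q$. For general $r$, axiom (d) shows $c(r,g)$ bounds $\{qg:q\le r\text{ dyadic}\}$; to see it is the least upper bound, take another upper bound $u$, replace it by $u\wedge c(r,g)$, pick dyadics $q_n\uparrow r$, and estimate $c(r,g)-u\le c(r,g)-c(q_n,g)=c(r-q_n,g)\le H^{m(n)}g$ with $m(n)\to\infty$, so $c(r,g)-u\le\bigwedge_n H^{m(n)}g=0$ by (iii); hence $c(r,g)=\bigvee_{q\le r}qg=rg$, so $c$ coincides with the structure built above. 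The main obstacle is the uniqueness-of-halves lemma: it is the single place where the full force of \Cref{th.main} and \Cref{prop.wedge_vee}, rather than just cancellation, is essential, and everything else — both existence and the uniqueness of the whole cone structure — is bootstrapped from it. A secondary, purely bookkeeping, difficulty is keeping the partial addition \emph{defined} when passing from dyadic to real scalars, which is handled by the projection count of \Cref{rem.proj_monoid}.
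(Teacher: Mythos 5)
Your proposal follows essentially the same route as the paper: your uniqueness-of-halves lemma is exactly \Cref{lem.half} (proved there, as by you, by reducing to the case of meet zero and invoking \Cref{prop.wedge_vee}), your fact (iii) is \Cref{lem.glb_of_dyadics}, the construction $rg=\sup\{qg: q\le r \text{ dyadic}\}$ is the paper's definition, and your uniqueness argument (any cone structure's $c(r,g)$ is squeezed against dyadic multiples, with the error bounded by $H^{m}g$ and killed by the vanishing infimum) is precisely \Cref{lem.unique_cone}.

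The one place where you genuinely deviate is the verification of axiom (d) of \Cref{def.cone} for non-dyadic scalars, and it is also the only shaky step. You want $sg+tg=(s+t)g$ by commuting sups with addition, which forces you to establish beforehand that $sg+tg$ is \emph{defined}; your justification via ``supporting projections $sp$ and $tp$'' is not rigorous as written: for irrational $s$ there is no projection ``$sp$'' in a general $W^*$-algebra, the supporting projection of the supremum $sg$ is not transparently controlled by $s$ (\Cref{rem.proj_monoid} only gives a monoid morphism, not compatibility with suprema), and the claim that the supports are jointly orthogonalizable ``precisely when $s+t\le 1$'' is unjustified, the boundary case $s+t=1$ in particular not being reachable by dominating both $s$ and $t$ by dyadics. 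This is repairable (e.g.\ bound $sg\le qg$, $tg\le q'g$ by dyadics with $q+q'\le 1$ when $s+t<1$, and handle $s+t=1$ separately via $tg\le g-sg$), but the paper's route avoids the issue altogether: it proves (d) in subtractive form, squeezing $(s+t)x-sx$ --- which is automatically defined since $sx\le(s+t)x$ --- between dyadic multiples close to $tx$ and applying \Cref{lem.glb_of_dyadics}, so that definedness of $sx+tx$ comes for free. Relatedly, your auxiliary facts (i)--(ii) (addition commutes with existing suprema, hence so do $x\mapsto kx$ and $H^n$) are true but not in the paper and do require short proofs; with the subtractive squeeze they are not needed at all.
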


We prove a few auxiliary results first.

\begin{lemma}\label{lem.half}
For an element $x\in\pi=\pi(\cF)$, there is at most one $y\in\pi$ such that $y+y=x$.  
\end{lemma}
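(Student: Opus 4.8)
The plan is to exploit the cancellability of the partial abelian monoid $\pi(\cF)$, together with the Dedekind completeness furnished by \Cref{th.main}, to pin down the halved element. Suppose $y+y = x$ and $y'+y' = x$. The strategy is to show that $y = y'$ by first locating a common lower bound, subtracting it off using cancellation, and then arguing that the residual parts must vanish.

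First I would set $w = y \wedge y'$, which exists by \Cref{lem.greatest_lower}. Write $y = w + a$ and $y' = w + b$, so that by construction $a \wedge b = 0$ (subtracting a common lower bound from both, as is done repeatedly in the proofs of \Cref{lem.greatest_lower} and \Cref{prop.wedge_vee}). Then $x = y+y = 2w + 2a$ and $x = y'+y' = 2w + 2b$; cancelling $2w$ (legitimate by \Cref{prop.monoid}) yields $a + a = b + b$. The goal is thus reduced to showing: if $a + a = b + b$ and $a \wedge b = 0$, then $a = b = 0$.

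For this reduced claim, the key observation is that $a + a = b + b$ forces $a \le b + b$ and $b \le a + a$, and I want to leverage \Cref{prop.wedge_vee}, which gives $a \vee b - a = b - a\wedge b = b$ and symmetrically $a\vee b - b = a$ (using $a \wedge b = 0$). Hence $a \vee b = a + b$, so $a + b \le a + a$ would give $b \le a$ by cancellation, and symmetrically $a \le b$; but combined with $a \wedge b = 0$ and antisymmetry (\Cref{lem.le}) this forces $a = b = 0$. So the crux is to establish $a + b \le a + a$ (equivalently $b \le a + a = b + b$, which is immediate) — wait, more carefully: from $a + a = b + b$ and $a\vee b = a+b$, I get $a + b = a\vee b \le a + a$ precisely when $a \vee b \le a+a$, i.e. when $b \le a + a$; and indeed $b \le b + b = a + a$. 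So $a + b \le a + a$, cancellation gives $b \le a$, symmetrically $a \le b$, hence $a = b$; then $a + a = b + b$ with $a = b$ and $a \wedge b = 0$ means $a = a \wedge b = 0$, so $a = b = 0$, whence $y = w = y'$.

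The main obstacle I anticipate is making sure every subtraction and every application of \Cref{prop.wedge_vee} is legitimate — in particular that the relevant common upper bounds exist so that the joins $a \vee b$ are defined. Since $a, b \le x$ (as $a \le y \le x$ and $b \le y' \le x$ via $x = y + y$ etc.), $a$ and $b$ do have the common upper bound $x$, so $a \vee b$ exists by \Cref{th.main}, and \Cref{prop.wedge_vee} applies. Once the bookkeeping of which elements dominate which is laid out cleanly, the argument is a short chain of cancellations, so the real work is just organizing the reduction to the case $a \wedge b = 0$ and invoking the previously established order-theoretic identities correctly.
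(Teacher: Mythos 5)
Your proof is correct and follows essentially the same route as the paper's: subtract the greatest lower bound to reduce to the meet-zero case, invoke \Cref{prop.wedge_vee} to identify the join with the sum, and then conclude by cancellation and antisymmetry. The only differences are cosmetic bookkeeping (keeping $w$ explicit and the redundant final step $a=b=0$, when $a=b$ already gives $y=y'$).
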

\begin{proof}
This follows rather quickly from the results of the previous section. Let $y,z\in\pi$ both satisfy the condition: \[ y+y = x = z+z. \] Subtracting $y\wedge z$ from each of $y$ and $z$, and $y\wedge z+y\wedge z$ from $x$ (which makes perfect sense by cancellability), we may as well assume $y\wedge z=0$. But then \Cref{prop.wedge_vee} says that $y\vee z=y+z$, and hence $x$, which dominates both $y$ and $z$, must also dominate their sum: $x\ge y+z$. This can be recast as either $y+y\ge y+z$ or $z+z\ge y+z$. The former implies (via cancellation) that $y\ge z$, while the latter shows the opposite inequality. Since `$\ge$' is a partial order, we are done: $y=z$.    
\end{proof}

The following simple observation will be handy to cite directly, so we record it. First, in order to make sense of the statement, we point out that (by \Cref{lem.half}) expressions such as $\frac x{2^n}$ are unambiguous for $x\in\pi(\cF)$ and positive integers $n$ whenever the sheaf $\cF$ is divisible.  

\begin{lemma}\label{lem.glb_of_dyadics}
Let $\cF$ be a divisible sheaf on a $W^*$-algebra, and $x\in\pi(\cF)$ an element. The greatest lower bound of the set of all $\frac x{2^n}$ for $n$ ranging over the positive integers is $0\in\pi$.  
\end{lemma}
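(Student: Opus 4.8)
The plan is a short order-theoretic argument that stays entirely inside the partial monoid $\pi=\pi(\cF)$, using only cancellation (\Cref{prop.monoid}) and the existence of greatest lower bounds (\Cref{lem.greatest_lower}); no operator-algebraic input is needed beyond the fact that $\cF$ is divisible (so that $\frac{x}{2^n}$ is unambiguous, by \Cref{lem.half}).

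First I would set $w=\bigwedge_{n\ge 1}\frac{x}{2^n}$, which exists by \Cref{lem.greatest_lower} since every element of $\pi$ is the class of some $x_i\in\cF(p_i)$, so $\{\frac{x}{2^n}\}_{n\ge1}$ is a legitimate (non-empty) family to which the lemma applies. Then $w\ge 0$ and $w\le\frac{x}{2^n}$ for every $n$, and the whole problem reduces to showing $w=0$.

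The key step is to show that $w+w$ is defined and that $w+w\le\frac{x}{2^n}$ for every $n\ge 1$. Fix $n$. Since $w\le\frac{x}{2^{n+1}}$, write $\frac{x}{2^{n+1}}=w+v$ for some $v\in\pi$. By divisibility, $\frac{x}{2^n}=\frac{x}{2^{n+1}}+\frac{x}{2^{n+1}}=(w+v)+(w+v)$ is defined; rearranging this expression using commutativity and associativity of the partial addition (\Cref{def.part_mon}) yields $(w+w)+(v+v)=\frac{x}{2^n}$. In particular $w+w$ is defined, and $w+w\le\frac{x}{2^n}$. Consequently $w+w$ is a lower bound for the family $\{\frac{x}{2^n}\}_{n\ge1}$, and since $w$ is its greatest lower bound we get $w+w\le w$. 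Finally, writing $w=(w+w)+t$ and cancelling a copy of $w$ (using cancellability, \Cref{prop.monoid}) gives $w+t=0$, whence $w=0$.

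The only point requiring any care is the rearrangement in the key step — namely that, in a partial abelian monoid, $(w+v)+(w+v)$ being defined forces $(w+w)+(v+v)$ to be defined and equal to it. This is a routine manipulation with the associativity and commutativity clauses of \Cref{def.part_mon}, and I expect it to be the only mild technical obstacle; alternatively one can cite the general fact that in a cancellable partial abelian monoid the induced order is compatible with partial addition (if $a\le b$, $c\le d$, and $b+d$ is defined, then $a+c\le b+d$), applied to $a=c=w$, $b=d=\frac{x}{2^{n+1}}$.
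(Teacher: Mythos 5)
Your proof is correct and is essentially the paper's own argument: the paper likewise takes the greatest lower bound $y$ of the family, observes $y+y\le \frac{x}{2^n}+\frac{x}{2^n}=\frac{x}{2^{n-1}}$ for all $n$, concludes $y+y\le y$, and hence $y=0$. Your only addition is to spell out why $y+y$ is defined and why $y+y\le y$ forces $y=0$ via cancellation, details the paper leaves implicit.
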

\begin{proof}
The greatest lower bound ($y$, say) exists by \Cref{th.main}. Since \[ y+y\le \frac x{2^n}+\frac x{2^n} = \frac x{2^{n-1}} \] for all positive integers $n$, it follows that $y+y\le y$. This implies $y=0$, as desired. 
\end{proof}

We next take on the uniqueness part of \Cref{th.main_bis}.

\begin{lemma}\label{lem.unique_cone}
In the setting of \Cref{th.main_bis}, there is at most one cone structure on $\pi=\pi(\cF)$. 
\end{lemma}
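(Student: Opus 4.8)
The plan is to show that on a divisible sheaf's moduli monoid $\pi = \pi(\cF)$, the value $rx$ for any $r \in [0,1]$ and $x \in \pi$ is forced by the monoid structure alone, using the axioms (a)--(e) of \Cref{def.cone} together with the completeness results of the previous section. First I would observe that dyadic rationals pin everything down on a formal level: for a dyadic rational $r = k/2^n \in [0,1]$, axioms (e) and (d) force $rx = \frac{k}{2^n}x$ to be the $k$-fold sum of $\frac{x}{2^n}$ with itself, where $\frac{x}{2^n}$ in turn is determined by \Cref{lem.half} (iterated $n$ times) as the unique element $y$ with the appropriate doubling property, which exists because $\cF$ is divisible. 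Thus any two cone structures agree on all pairs $(r,x)$ with $r$ dyadic. Note that here one uses $k/2^n \le 1$, so axiom (d) applies legitimately; and one should check that the $k$-fold sum is actually defined, which follows since it equals $rx$ in a hypothetical cone and the cone operation takes values in $\pi$.

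The substance of the argument is to upgrade this to arbitrary $r \in [0,1]$ by an approximation/monotonicity argument, which is where \Cref{th.main} is genuinely needed. The key point is that a cone structure is automatically monotone in the scalar: if $s \le t$ in $[0,1]$ then $sx \le tx$, because $tx = sx + (t-s)x$ by axiom (d) (using $(t-s) + s = t \le 1$), exhibiting $sx$ as a summand of $tx$. Given $r \in [0,1]$, write $r$ as the supremum of an increasing sequence of dyadic rationals $r_n \uparrow r$ and as the infimum of a decreasing sequence $r_n' \downarrow r$. Monotonicity gives $r_n x \le rx \le r_n' x$ for all $n$, so $rx$ is sandwiched between $\sup_n r_n x$ and $\inf_n r_n' x$ (both of which exist: the sup because the $r_n' x$ form a common upper bound and \Cref{th.main} applies, the inf by \Cref{lem.greatest_lower}), and these two bounds are computed purely from the dyadic values, hence are the same for both cone structures. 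The final step is to show the sandwich is tight, i.e. $\sup_n r_n x = \inf_n r_n' x$. For this I would examine the difference: choosing $r_n' - r_n \le 2^{-n}$, cancellability gives $r_n' x - r_n x = (r_n' - r_n)x \le \frac{x}{2^{n-k}}$ for suitable $k$ (bounding the dyadic scalar $r_n' - r_n$ above by a power-of-two scalar and using monotonicity), and then \Cref{lem.glb_of_dyadics} forces the greatest lower bound of these differences to be $0$; a short cancellation argument then yields $\inf_n r_n' x \le \sup_n r_n x$, hence equality. Therefore $rx$ is uniquely determined, and the two cone structures coincide.

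I expect the last step --- squeezing out the gap between the dyadic lower and upper approximants --- to be the main obstacle, since it is the only place where one must combine the order-completeness of $\pi$ with the partial (rather than total) nature of the addition, and one has to be careful that all the subtractions invoked are legitimate (they are, by the cancellability in \Cref{prop.monoid}) and that the relevant suprema exist (they do, by \Cref{th.main}, because the decreasing dyadic approximants furnish a common upper bound). Everything else is a routine unwinding of the cone axioms on dyadics plus the uniqueness of halves from \Cref{lem.half}.
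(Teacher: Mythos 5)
Your proposal is correct and follows essentially the same route as the paper: dyadic multiples are pinned down by \Cref{lem.half} and axiom (d), and then $sx$ for general $s$ is recovered from the dyadic approximants using monotonicity, the completeness from \Cref{th.main}, and the squeezing via \Cref{lem.glb_of_dyadics}. The only cosmetic difference is that you sandwich $sx$ between the supremum of lower and the infimum of upper dyadic approximants, whereas the paper shows directly that $sx$ equals the supremum of the lower approximants, bounding the gap by $\frac{x}{2^n}$.
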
 
\begin{proof}
We know from \Cref{lem.half} and condition (d) of \Cref{def.cone} that multiplication by $\frac 12$ is uniquely determined, and hence, by repreated applications of the same argument, multiplication by any dyadic rational number (i.e. with a power-of-two denominator) is unique. We will be done if we manage to show that in any cone structure, any multiple $sx$ for $x\in\pi$ and $s\in(0,1)$ is the supremum of $tx$ over all dyadic rationals $t\le s$. We focus on this claim for the remainder of the proof. 

Denote by $y$ the supremum in the claim, so that $sx\ge y$. Assume $s$ is not a dyadic rational, or there is nothing to prove. For any positive integer $n$, we can find a dyadic rational $t$ such that $\displaystyle s\in \left(t,t+\frac 1{2^n} \right)$. This makes difference $sx-y$ at most \[ \left(t+\frac 1{2^n}\right)x-tx = \frac x{2^n}. \] Since this holds for all positive integers $n$, the conclusion follows by \Cref{lem.glb_of_dyadics}.
\end{proof}

\begin{proof of main_bis}
One implication (cone $\Rightarrow$ divisible) is clear, and uniqueness is taken care of by \Cref{lem.unique_cone}. We focus on the other implication of the first statement, and so start out with a divisible sheaf $\cF$ on $M$.  

Divisibility allows us to divide and $x\in\pi$ by powers of $2$ and then, by adding such terms of the form $\frac x{2^n}$, to define multiplication by dyadic rationals. Properties (a) - (e) are easily checked for dyadic rational coefficients using only \Cref{lem.unique_cone}. For non-dyadic coefficients $s$, set
	\[
		sx = \sup_{t\le s} tx = \inf_{t\ge s} tx,
	\]
both ranging over dyadics. That the supremum and the infimum coincide is essentially shown in the course of the proof of \Cref{lem.unique_cone} (the argument is the same: One places $s$ inside a small dyadic interval of length $\frac 1{2^n}$, etc.). Checking the properties is now rather routine, and we illustrate by proving only (d) of \Cref{def.cone}.

As in the definition, let $s,t\in(0,1)$ satisfy $s+t\le 1$, and $x\in\pi$ arbitrary. Assume for simplicity that $s+t$ is strictly less than $1$. Placing both $s$ and $t$ inside tiny dyadic intervals $(s-\varepsilon_1,s+\varepsilon_2)$ and $(t-\delta_1,t+\delta_2)$, note
	\begin{multline*}
		(t-\delta_1-\varepsilon_1-\varepsilon_2)x\ =\ (s+t-\varepsilon_1-\delta_1) x - (s+\varepsilon_2) x\ \le \\ \le\ (s+t)x - sx\ \le\  (s+t+\varepsilon_2+\delta_2) x - (s-\varepsilon_1) x\ =\ (t+\delta_2+\varepsilon_1+\varepsilon_2) x. 
	\end{multline*}
Both sides are dyadic multiples of $x$, and making the $\varepsilon$'s and $\delta$'s approach zero squeezes both sides to $tx$. A similar argument works for $s+t=1$, approximating only $s$ by dyadics close to it. 
\end{proof of main_bis}

Let's again specialize to sheaves of the form $\cat{W}^*_N$ for separable $\2$ factors $N$. The divisibility hypothesis of \Cref{th.main_bis} would be satisfied if, for instance, we knew that for any projection $p\in M$, the relative commutant of any separable von Neumann subalgebra of $pMp$ contains $M_2$ unitally (in which case we say that $M$ has property $\dv$). Indeed, applying property $\dv$ to images of $N$ through not-necessarily-unital morphisms $N\to M$, we recover the characterization of divisibility for $\cat{W}^*_N$ noted in \Cref{rem.div_sheaf}.  

When $M$ is an ultrapower $X^\omega$ of a separable $\2$ factor $X$, $\dv$ is in fact equivalent to $X$ being McDuff (i.e. $X\otimes R\cong X$). The direction McDuff $\Rightarrow$ $\dv$ follows from \cite[Lemma 17]{MR2997389} (see also \cite[3.2.3]{MR2799809}, which is a precursor for unital morphisms when $X=R$, but contains the main ideas of the proof). For the opposite implication, it is known that McDuffness follows from the non-commutativity of the diagonal morphism $X\to X^\omega$; this is the reformulation given in \cite[2.7]{MR2846474} to one of the main results of \cite{MR0281018}. This same discussion applies to $M=X^\omega\otimes\cB(\ell^2)$, which is the choice relevant to \cite{MR2997389}. 

In any event, this particular case of \Cref{th.main_bis} recovers the cone structure on $\pi(\cF)$ from a somewhat different perspective: We still need to check divisibility (by $2$, in Sherman's more refined terminology \cite{MR2846474}), but the rest of the structure and the cone conditions in \Cref{def.cone} follow mechanically and uniquely, and for very general reasons that are not specific to being McDuff or having a large fundamental group.

\section{Completely positive / contractive maps as homomorphisms}\label{se.adjoints}

As outlined briefly in the introduction, the aim of this short section is to explain why \Cref{ex.Wast_mor,ex.cp,ex.cc} are actually the same kind of sheaf (\Cref{cor.adjoints}). There are no proofs in this section; although not difficult, they would take us too far afield, and will appear elsewhere \cite{C}. 

Let us focus on \Cref{ex.cp}, to fix ideas. The claim is that any von Neumann algebra $N$ comes equipped with a canonical completely positive unital (and as always, normal) map $\varphi_N: N\to\wt N$ such that any completely positive unital map $N\to M$ factors uniquely as
	\[
		\tikz[anchor=base]{
  		\path (0,0) node (1) {$N$} +(4,0) node (3) {$M$} +(2,-1) node (2) {$\wt N$};
  		\draw[->] (1) -- (3);
  		\draw[->] (1) -- (2) node[pos=.5,auto,swap] {$\scriptstyle \varphi_N$};
  		\draw[->] (2) -- (3); 
 		}
	\]
for a $W^*$ morphism $\wt N\to M$. The uniqueness part of the assertion then makes the whole thing functorial: $N\mapsto\wt N$ is a functor from the category ($W^*$-algebras, completely positive unital maps) to ($W^*$-algebras, $W^*$ morphisms), and in fact the universality property of $\varphi_N$ says that this functor is left adjoint to the inclusion of categories going in the opposite direction (a $W^*$ morphism being completely positive). The analogous discussion applies to completely contractive maps. 

Denoting by $\cat{W}^*$, $\cat{CPU}$, and $\cat{CCU}$ the categories of $W^*$-agebras and unital $W^*$ morphisms, completely positive maps, and completely contractive maps respectively, the statement is:

\begin{proposition}\label{prop.adjoints}
The inclusion functors $\cat{W}^*\to\cat{CPU}$ and $\cat{W}^*\to\cat{CCU}$ both have left adjoints. \qedhere
\end{proposition}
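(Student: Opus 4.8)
The plan is to obtain both left adjoints from the General Adjoint Functor Theorem (\cite[V.6]{MR1712872}), applied to the inclusion functors $G\colon\cat{W}^*\to\cat{CPU}$ and $G\colon\cat{W}^*\to\cat{CCU}$. Since $\cat{W}^*$ is locally small, this reduces matters to three points: (i) $\cat{W}^*$ is complete; (ii) $G$ preserves small limits; (iii) $G$ satisfies the solution set condition. Points (i) and (ii) are formal and I expect them to go through quickly; (iii) is where the real work lies.

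For (i): the product in $\cat{W}^*$ of a family $(M_i)_{i\in I}$ is the $\ell^\infty$-direct sum $\bigoplus_i M_i$ (with predual the $\ell^1$-direct sum of the $(M_i)_*$), and the equalizer of two normal morphisms $f,g\colon M\to N$ is the von Neumann subalgebra $\{x\in M:f(x)=g(x)\}$; thus $\cat{W}^*$ has all small limits. For (ii) one checks that these same objects are also the products and equalizers in $\cat{CPU}$ (and, identically, in $\cat{CCU}$): a normal unital completely positive (resp.\ completely contractive) map into $\bigoplus_i M_i$ amounts, via the coordinate projections, to a family of such maps into the $M_i$ — normality of the assembled map follows because each component has norm $\le 1$ and the predual is an $\ell^1$-sum — and a map of the relevant type into $M$ whose image lands in a von Neumann subalgebra $E\subseteq M$ is automatically of the same type as a map into $E$, since $E\otimes M_n\subseteq M\otimes M_n$ is isometric and reflects positive elements while the $\sigma$-weak topology of $E$ is induced from that of $M$. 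This is the only mildly delicate part of (i)--(ii).

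The substance of the proof is the solution set condition (iii). Fix $N\in\cat{W}^*$. Given any normal unital completely positive (or completely contractive) map $\varphi\colon N\to M$, let $B\subseteq M$ be the von Neumann subalgebra generated by $\varphi(N)$, so that $\varphi$ factors as $N\to B\hookrightarrow M$, the first map of the relevant type and the second a $\cat{W}^*$ morphism. The point is that the ``size'' of $B$ is bounded in terms of $N$ alone: the $*$-algebra generated by $\varphi(N)$ has cardinality at most $\lambda:=\max(|N|,2^{\aleph_0})$, hence density character at most $\lambda$; since normal functionals on $B$ are determined by their restriction to this $\sigma$-weakly dense $*$-algebra, $|B_*|\le 2^{\lambda}=:\mu$; and a $W^*$-algebra with predual of cardinality $\le\mu$ admits a faithful normal representation on a Hilbert space of bounded dimension (e.g.\ the Hilbert-space direct sum of the GNS spaces of a separating family of at most $\mu$ normal states), so up to isomorphism there is only a \emph{set} of such algebras. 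Choosing representatives $B_i$ of these isomorphism classes, the family of all normal unital completely positive (resp.\ completely contractive) maps $N\to B_i$ is a set, and by construction every $\varphi$ factors through one of them followed by a $\cat{W}^*$ morphism. This is precisely the solution set condition, and GAFT then yields the desired left adjoints; the unit map $\varphi_N\colon N\to\wt N$ it produces can be recognized concretely as the expected Stinespring-type universal dilation.

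The case of $\cat{CCU}$ runs verbatim as that of $\cat{CPU}$: none of the steps above distinguish complete positivity from complete contractivity. The step I expect to be the main obstacle is the bookkeeping in (iii) — concretely, the claim that replacing $N$ by the von Neumann algebra generated by a completely positive image of it does not enlarge the predual past a cardinal depending only on $N$ — which, though standard, requires care with the interplay between the norm and $\sigma$-weak topologies.
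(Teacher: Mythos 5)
Your GAFT argument is correct, but there is nothing in the paper to measure it against: \Cref{se.adjoints} deliberately contains no proofs, and \Cref{prop.adjoints} is only announced, with the argument deferred to \cite{C}; the paper merely records the resulting universal property (the unit $\varphi_N\colon N\to\wt N$ through which every normal unital completely positive map factors uniquely). Taken on its own terms, your proof goes through: completeness of $\cat{W}^*$ via $\ell^\infty$-direct sums and equalizer subalgebras; preservation of these limits by the inclusion functors (the $\ell^1$-predual argument for normality of the map assembled into a product, and the fact that matrix positivity, matrix norms and the $\sigma$-weak topology on a von Neumann subalgebra are inherited from the ambient algebra); and the solution-set condition via the cardinality bound on the von Neumann subalgebra generated by the image, which is the right standard device (for $\cat{CCU}$ note the image is even $*$-closed, since unital contractions are positive, though your argument never needs this). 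What the adjoint functor theorem buys is existence with essentially no operator-algebraic work, plus automatic uniqueness of the factorization and functoriality; what it does not buy is any concrete grip on $\wt N$ --- its size, its relation to $N$, whether separability or other properties survive --- which is presumably what the explicit construction intended for \cite{C} provides and what one would want when actually using \Cref{cor.adjoints}. One caution: your closing identification of the unit $\varphi_N$ with a ``Stinespring-type universal dilation'' is not justified by the argument and should be dropped or proved separately; Stinespring dilates a single completely positive map into some $\cB(H)$, whereas $\varphi_N$ is universal among all normal unital completely positive maps out of $N$, a genuinely different object.
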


Denote by $N\mapsto\wh N$ the left adjoint to the second inclusion $\cat{W}^*\to \cat{CCU}$ of the statement. Consider the bijection $\cat{CPU}(\wt N\to M)\cong \cat{W}^*(N\to M)$ induced by $\varphi_N$, and similarly for $\cat{CCU}$. Applying these bijections to all corner subalgebras $pMp$ instead of just $M$, we get:

\begin{corollary}\label{cor.adjoints}
For any von Neumann algebra $M$, the canonical completely positive map $N\to\wt N$ induces an isomorphism $\cat{CP}_N\cong\cat{W}^*_{\wt N}$ of sheaves on $M$. Similarly, we have a canonical isomorphism $\cat{CC}_N\cong\cat{W}^*_{\wh N}$. \qedhere
\end{corollary}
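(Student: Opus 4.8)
The plan is to derive \Cref{cor.adjoints} from \Cref{prop.adjoints} by a ``cornerwise'' argument. Fix a von Neumann algebra $M$, and let $L$ denote the left adjoint of $\cat{W}^*\hookrightarrow\cat{CPU}$, so that $LN=\wt N$ and, for every $W^*$-algebra $P$, there is a bijection
\[
	\Phi_P:\ \cat{W}^*(\wt N,P)\ \xrightarrow{\ \sim\ }\ \cat{CPU}(N,P),\qquad \xi\longmapsto\xi\circ\varphi_N,
\]
natural in $P$ (I treat $\cat{CC}_N$, $\wh N$ only at the end, the argument being identical). Specializing to $P=1_AM1_A$ for $A\in\cC_M$, it suffices to show \textbf{(i)} that $\Phi_{1_AM1_A}$ carries the subset $\cat{W}^*_{\wt N}(A)\subseteq\cat{W}^*(\wt N,1_AM1_A)$ bijectively onto $\cat{CP}_N(A)\subseteq\cat{CPU}(N,1_AM1_A)$ (on both sides: the maps whose image commutes with $A$), and \textbf{(ii)} that these restricted bijections are natural with respect to the morphisms of $\cC_M$; together these say precisely that the $\Phi_{1_AM1_A}$ constitute an isomorphism $\cat{CP}_N\cong\cat{W}^*_{\wt N}$ of presheaves, hence of sheaves, on $M$.

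For (i), one inclusion is immediate: if $\xi(\wt N)$ commutes with $A$ then so does the smaller set $(\xi\varphi_N)(N)$. For the converse I would first record that $\wt N$ is generated, as a von Neumann algebra, by $\varphi_N(N)$: writing $N_0\subseteq\wt N$ for the von Neumann subalgebra so generated, the corestriction $N\to N_0$ of $\varphi_N$ still enjoys the universal property of \Cref{prop.adjoints} (factorizations exist by restricting those through $\wt N$, and are unique since two normal $*$-homomorphisms out of $N_0$ agreeing on $\varphi_N(N)$ agree), so $N_0=\wt N$ by uniqueness of adjoints. Since a $W^*$ morphism has weakly closed image, it follows that $\xi(\wt N)=W^*\big((\xi\varphi_N)(N)\big)$ for every $\xi$; and because the relative commutant of $A$ in $1_AM1_A$ is a von Neumann algebra, $\xi(\wt N)$ commutes with $A$ as soon as $(\xi\varphi_N)(N)$ does. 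This gives (i).

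For (ii), recall that $\cat{W}^*_{\wt N}$ and $\cat{CP}_N$ are presheaves on $\cC_M$ via the ``conjugate by $u$'' operation: a morphism $u:A\to B$ (a partial isometry with $u^*u=1_A$, $uu^*\le1_B$ and $uAu^*\le B$) acts by $\eta\mapsto\big(\,\cdot\,\mapsto u^*\eta(\,\cdot\,)u\big)$. This is unital, since $u^*1_Bu=u^*u=1_A$, and completely positive; on the $W^*$ side it is moreover multiplicative because $uu^*=u1_Au^*$ lies in $uAu^*\subseteq B$ and hence commutes with $\eta(\wt N)$; and it again lands among maps commuting with $A$, by the one-line computation that for $a\in A$ one has $au^*\eta(n)u=u^*(uau^*)\eta(n)u=u^*\eta(n)(uau^*)u=u^*\eta(n)u\,a$, using $uau^*\in B$ and $(uau^*)u=ua$. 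Compatibility with $\Phi$ is then just associativity of composition: if $\theta=\xi\circ\varphi_N$ then $u^*\theta(\,\cdot\,)u=\big(u^*\xi(\,\cdot\,)u\big)\circ\varphi_N$. Hence the $\Phi_{1_AM1_A}$ assemble into an isomorphism of presheaves $\cat{W}^*_{\wt N}\cong\cat{CP}_N$ on $\cC_M$ (and, since a natural isomorphism carries limiting cones to limiting cones, the sheaf property of one side transfers to the other). Running the whole argument with $\wh N$, completely contractive maps, and ``conjugation by $u$ is completely contractive'' in place of ``completely positive'' yields $\cat{CC}_N\cong\cat{W}^*_{\wh N}$.

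Beyond invoking \Cref{prop.adjoints} itself (whose proof is deferred to \cite{C}), the one genuinely non-formal ingredient is the generation statement $\wt N=W^*(\varphi_N(N))$ (and its analogue for $\wh N$), which is what makes the ``commutes with $A$'' conditions on the two sides match up. As indicated, this is a soft consequence of the uniqueness of left adjoints, so I do not anticipate a serious obstacle here — the remaining effort is bookkeeping those commutation conditions and the conjugation functoriality through the adjunction.
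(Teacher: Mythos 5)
Your argument is correct and follows essentially the same route as the paper, which obtains \Cref{cor.adjoints} simply by applying the adjunction bijections of \Cref{prop.adjoints} to the corner subalgebras $1_AM1_A$ (leaving the detailed verification to the companion paper \cite{C}). The extra ingredients you supply --- the generation fact $\wt N=W^*(\varphi_N(N))$ to match the ``commutes with $A$'' conditions, and the check of naturality under conjugation by the partial isometries of $\cC_M$ --- are exactly the bookkeeping the paper omits, and they are carried out correctly.
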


It seems unlikely that \define{all} sheaves as in \Cref{def.sheaf} are of the form $\cat{W}^*_N$, but I do not know any counterexamples. Other natural examples, such as the sheaf associating to $A\in\cC_M$ the set of unitaries or contractions in the relative commutant $A'\cap 1_AM1_A$, are even more obviously of that form than \Cref{ex.cp,ex.cc}.


\addcontentsline{toc}{section}{References}

\end{document}